\pgfplotsset{compat=1.15}
\newcommand{\footremember}[2]{%
    \footnote{#2}
    \newcounter{#1}
    \setcounter{#1}{\value{footnote}}%
}
\def\R{{\mathbb R}}
\def\P{{\mathbb P}}
\def\N{{\mathbb N}}
\DeclareMathOperator{\rank}{rank}
\DeclareMathOperator{\bit}{bit}
\DeclareMathOperator{\sing}{sing}
\DeclareMathOperator{\reg}{reg}
\DeclareMathOperator{\KKT}{KKT}
\newtheorem{theorem}{\bf Theorem}%[section]
\newtheorem{lemma}{\bf Lemma}
\newtheorem{example}{\bf Example}%[section]
\newtheorem{remark}{\bf Remark}%[section]
\providecommand{\keywords}[1]
{
  %\small	
  \textbf{\textbf{Keywords:}} #1
}
\begin{document}
\definecolor{qqzzff}{rgb}{0,0.6,1}
\definecolor{ududff}{rgb}{0.30196078431372547,0.30196078431372547,1}
\definecolor{xdxdff}{rgb}{0.49019607843137253,0.49019607843137253,1}
\definecolor{ffzzqq}{rgb}{1,0.6,0}
\definecolor{qqzzqq}{rgb}{0,0.6,0}
\definecolor{ffqqqq}{rgb}{1,0,0}
\definecolor{uuuuuu}{rgb}{0.26666666666666666,0.26666666666666666,0.26666666666666666}
\newcommand{\vi}[1]{\textcolor{blue}{#1}}
\newif\ifcomment
\commentfalse
\commenttrue
\newcommand{\comment}[3]{%
\ifcomment%
	{\color{#1}\bfseries\sffamily#3%
	}%
	\marginpar{\textcolor{#1}{\hspace{3em}\bfseries\sffamily #2}}%
	\else%
	\fi%
}

\newcommand{\mapr}[1]{{{\color{blue}#1}}}
\newcommand{\revise}[1]{{{\color{blue}#1}}}
\newcommand{\victor}[1]{{{\color{red}#1}}}

\title{A Nichtnegativstellensatz on singular varieties under the denseness of regular loci}
%\title{Semidefinite programming, sums of squares, and polynomial optimization}

\author{%
Ngoc Hoang Anh Mai\footremember{1}{University of Konstanz,
Universit\"atsstra{\ss}e 10, D-78464 Konstanz, Germany.}%\qquad
%Victor Magron\footrecall{1} \footremember{2}{Universit\'e de Toulouse; LAAS; F-31400 Toulouse, France.}\\
%Bernard Mourrain\footremember{3}{Inria; 06902 Sophia Antipolis, France.}
  %Jean-Bernard Lasserre\footrecall{1} \footrecall{2}
  }
%\date{March 29, 2019}

\maketitle

\begin{abstract}
Let $V$ be a real algebraic variety with singularities and  $f$ be a real polynomial non-negative on $V$.
Assume that the regular locus of $V$ is dense in $V$ by the usual topology.
Using Hironaka's resolution of singularities and Demmel--Nie--Powers' Nichtnegativstellensatz, we obtain a sum of squares-based representation that characterizes the non-negativity of $f$ on $V$.
This representation allows us to build up exact semidefinite relaxations for polynomial optimization problems whose optimal solutions are possibly singularities of the constraint sets.
\end{abstract}
\keywords{sum of squares; Nichtnegativstellensatz; gradient ideal; resolution of singularities;  polynomial optimization; Karush--Kuhn--Tucker conditions; semidefinite programming}

\tableofcontents
%\normalsize
%
\section{Introduction}

For almost 135 years since Hilbert's work \cite{hilbert1888darstellung}, representations of polynomials non-negative on semi-algebraic sets (Nichtnegativstellens\"atze for short) have been widely studied with influential applications to real-world problems.
One of these applications is to construct exact semidefinite programs for polynomial optimization problems (see Lasserre's hierarchy \cite{lasserre2001global}).
In other words, Nichtnegativstellens\"atze allow us to find the exact solutions for a non-convex problem through convex problems.
Although the first two representations by Hilbert--Artin \cite{hilbert1888darstellung,artin1927zerlegung} and Krivine--Stengle \cite{krivine1964anneaux,stengle1974nullstellensatz} did not directly produce such exact semidefinite programs, later ones have enabled us to achieve them under certain conditions.
In this work and our recent efforts \cite{mai2022exact,mai2022complexity2,mai2022sums}, these conditions have been gradually reduced to the most general case.
We list in Table \ref{tab:top.N-satz} different types of Nichtnegativstellens\"atze and their applicability to polynomial optimization.
\begin{table}
    \caption{Nichtnegativstellens\"atze and exact semidefinite programs (SDPs).}
    \label{tab:top.N-satz}
\begin{center}
\begin{tabular}{|m{0.7cm}|m{3.6cm}|m{5.2cm}|m{0.8cm}|}
\hline 
Year& Author(s) & Requirement 

(Method) & Exact SDPs\\ 
\hline
1888& Hilbert--Artin \cite{hilbert1888darstellung,artin1927zerlegung} & whole space,

non-prescribed denominators &no
\\
 \hline
1964 & Krivine--Stengle \cite{krivine1964anneaux,stengle1974nullstellensatz}& non-prescribed denominators & no
\\
  \hline
2000 & Scheiderer \cite{scheiderer2000sums,scheiderer2003sums,scheiderer2006sums} & low dimension,

compact basic semi-algebraic sets 

(local–global principle)
& yes
\\
\hline 
2006 & Marshall \cite{marshall2006representations,marshall2009representations} & Archimedean condition, 

boundary Hessian conditions

(local–global principle)
& yes 
 \\
\hline 
2006 & Nie--Demmel--Sturmfels \cite{nie2006minimizing} & whole space

(gradient ideals)
& yes 
 \\
 \hline
2007 & Demmel--Nie--Powers  \cite{demmel2007representations} & Karush--Kuhn--Tucker conditions
 & yes\\
 \hline
 2013 & Nie  \cite{nie2013polynomial} & finite real algebraic varieties
 & yes\\
 \hline
2022 & Mai  \cite{mai2022exact,mai2022complexity2}& finite images of singular loci

(Fritz John conditions)
& yes\\
 \hline
2022 & Mai (Theorem \ref{theo:rep.nonneg}) & denseness of regular loci

(resolution of singularities) & yes
 \\\hline
 2022 & Mai--Magron \cite{mai2022sums} & no 

(decomposition of singular loci) & yes\\
 \hline
 
\end{tabular}    
\end{center}
\end{table}
In this paper, we omit Positivstellens\"atze, representations of polynomials strictly positive on semi-algebraic sets, because they provide approximate semidefinite programs for polynomial optimization problems.

\paragraph{Previous works.} Hilbert and Artin \cite{hilbert1888darstellung,artin1927zerlegung} state and prove that every globally non-negative polynomial is a sum of squares of rational functions.
Krivine and Stengle \cite{krivine1964anneaux,stengle1974nullstellensatz} extend this result to the case of basic semi-algebraic sets, which are defined by polynomial inequalities.
It says that every polynomial non-negative on a basic semi-algebraic set can be expressed as the linear combination of the products of polynomials defining this set with weights being the sums of squares of rational functions.
Thus non-prescribed denominators exist in these two Nichtnegativstellens\"atze, which leads to difficulties in applying them to polynomial optimization.

Representation without denominators developed by Scheiderer \cite{scheiderer2000sums,scheiderer2003sums,scheiderer2006sums} allows him to replace sums of squares of rational functions in Krivine--Stengle's Nichtnegativstellensatz with sums of squares of polynomials.
However, this representation requires the boundedness of basic semi-algebraic sets in low-dimensional spaces.
Later Marshall \cite{marshall2006representations,marshall2009representations} improves this result in arbitrarily dimensional spaces using the Archimedean condition and the boundary Hessian conditions.
Marshall's Nichtnegativstellensatz states that every polynomial non-negative on a basic semi-algebraic set can be written as the linear combination of polynomials defining this set with weights being the sums of squares of polynomials.
Note that the number of sums of squares in this representation is not exponential as in the one by Scheiderer.
Moreover, the boundary Hessian conditions also partially reveal the use of optimality conditions in later representations.

Using the gradient of a non-negative polynomial $f$, Nie, Demmel, and Sturmfels \cite{nie2006minimizing} obtain a representation of $f$ with application in unconstrained polynomial optimization.
They claim that $f$ is identical to a sum of squares of polynomials on the set where the gradient of $f$ vanishes.
This result is extended to the case of $f$ non-negative on a basic semi-algebraic set $S$ by Demmel, Nie, and Powers \cite{demmel2007representations}.
It states that $f$ is concise to the linear combination of the products of polynomials defining $S$ with weights being sums of squares of polynomials on the set where the gradient of the Lagrangian for minimizing $f$ on $S$ vanishes.
Demmel--Nie--Powers' Nichtnegativstellensatz can be applied to polynomial optimization problems that the Karush--Kuhn--Tucker conditions hold at some global minimizer.
We prove in Lemma \ref{lem:KKT} that the Karush--Kuhn--Tucker conditions hold at regular points of the constraint sets.

Our previous works \cite{mai2022exact,mai2022complexity2} deal with the singular case of the constraint sets using the Fritz John conditions.
We obtain a similar Nichtnegativstellensatz to Demmel--Nie--Powers' by replacing the standard Lagrangian with a generalized form with a multiplier for the objective function.
We apply this representation to polynomial optimization in cases where the images of the singular loci of the constraint sets under the objective polynomials are finite.
(The motivation for the study of singularity theory in polynomial optimization is mentioned in \cite{mai2022symbolic}.)
So far, the answer to the existence of a general representation seems to be only a hair's breadth away.

\paragraph{Contribution.}
%Instead of using the decomposition of singular loci,
Let $V$ be a real algebraic variety (common real zeros of a system of polynomials) with singularities and $f$ be a real polynomial non-negative on $V$.
In order to provide a representation of $f$ as in Theorem \ref{theo:rep.nonneg}, this paper aims to convert $V$ to a regular variety $W$ through the resolution of singularities by Hironaka \cite{hironaka1964resolution,hironaka1964resolution2}.
More explicitly, there exists a proper birational
morphism $\varphi:W\to V$ such that $\varphi$ is an isomorphism over $V^{\reg}$, the regular locus of $V$.
Note that $\varphi$ is defined by a vector of polynomials.
We now assume that $V^{\reg}$ is dense in $V$ (by the usual topology).
Then the non-negativity of $f$ on $V$ is equivalent to the non-negativity of $f$ on $V^{\reg}$.
Since $\varphi$ is an isomorphism over $V^{\reg}$, $f$ is non-negative on $V^{\reg}$ if and only if $f\circ \varphi$ is non-negative on $W$.
Suppose that $f\circ \varphi$ attains its infimum value on $W$.
%It implies that $f\ge 0$ on $V$ iff $f\circ\frac{g}{h}\ge 0$ on $W$.
We then apply Demmel--Nie--Powers' Nichtnegativstellensatz for $f\circ \varphi$ non-negative on $W$.
Thus a new representation that characterizes the non-negativity of $f$ on $V$ is obtained. 
Consequently, in Theorem \ref{theo:pop}, we build up exact semidefinite relaxations based on this Nichtnegativstellensatz for a polynomial optimization problem.

For interested readers, the resolution of singularities is a powerful and frequently used tool in various applications of algebraic geometry.
It allows us to consider a singular variety as the image of some regular one, where problems are easily solved.
In \cite{watanabe2009algebraic}, Watanabe provides accurate estimation methods in singular statistical models using the resolution of singularities.
However, in the study of Nichtnegativstellensatz and its application to polynomial optimization, this tool has not been explored until now despite the increasing appearance of singular models (e.g., neural networks, HMMs, Bayesian networks, and stochastic context-free grammars).

\paragraph{Forthcoming work.}
Instead of using the resolution of singularities,
Mai and Magron \cite{mai2022sums} handle the general case of polynomial $f$ non-negative on a real algebraic variety $V$ by decomposing $V$  into finitely many subvarieties $V_1,\dots,V_r$. 
Here each $V_j$ is defined by higher-order derivatives of polynomials defining $V$.
Moreover, one of the following two cases occurs: (i) $V_j$ is the singular locus of some $V_i$, and (ii) $V_j$ has zero dimension.
If a polynomial $f$ is non-negative on $V$, then $f$ is non-negative on each $V_j$. 
In this case, the choice of using Demmel--Nie--Powers'  \cite{demmel2007representations} or Nie's Nichtnegativstellensatz \cite{nie2013polynomial} for $f$ over $V_j$ depends on the dimension of $V_j$.
Consequently, every polynomial optimization problem (particularly hyperbolic program) is equivalent to a semidefinite program.

\paragraph{Organization.}
We organize the paper as follows: 
Section \ref{sec:Preliminaries} presents some preliminaries from real algebraic geometry needed to prove our main results.
Section \ref{sec:represent.sing} is to state our Nichtnegativstellensatz on singular varieties. 
Section \ref{sec:POP.general} is to construct our exact semidefinite programs for polynomial optimization problems.

\section{Preliminaries}
\label{sec:Preliminaries}
%In this section we present some preliminaries from real algebraic geometry needed for the proofs of our main results.
\subsection{Real algebraic varieties}
Let $\R[x]$ denote the ring of polynomials with real coefficients in the vector of variables $x=(x_1,\dots,x_n)$.
Let $\R_r[x]$ denote the linear space of polynomials in $\R[x]$ of degree at most $r$.
Let $\|\cdot\|_2$ denote the $l_2$-norm of a real vector.
Then $\|x\|_2^2=x_1^2+\dots+x_n^2$ is a polynomial in $x$.

Given $h_1,\dots,h_l$ in $\R[x]$, we denote by $V(h)$ the (real) algebraic variety in $\R^n$ defined by the vector $h=(h_1,\dots,h_l)$, i.e.,
\begin{equation}
V(h):=\{x\in \R^n\,:\,h_j(x)=0\,,\,j=1,\dots,l\}\,.
\end{equation}
In this case, $h_1,\dots,h_l$ are called the polynomials defining $V(h)$. 

Given $h_1,\dots,h_l\in\R[x]$, let $I(h)[x]$ be the ideal generated by $h=(h_1,\dots,h_l)$, i.e.,
\begin{equation}
    I(h)[x]:= \sum_{j=1}^l h_j \R[x]\,.
\end{equation}
The real radical of an ideal $I(h)[x]$, denoted by $\sqrt[\R]{I(h)[x]}$, is defined as
\begin{equation}
{\sqrt[\R]{I(h)[x]}}:=\{f\in\R[x]\,:\,\exists m\in \N\,:\,-f^{2m}\in\Sigma^2[x]+I(h)[x]\}\,.
\end{equation}
Krivine--Stengle's Nichtnegativstellensatz \cite{krivine1964anneaux} imply that
\begin{equation}\label{eq:real.radi}
\sqrt[\R]{I(h)[x]}:=\{p\in\R[x]\,:\,p=0\text{ on }V(h)\}\,.
\end{equation}
We say that $I(h)[x]$ is real radical if $I(h)[x]=\sqrt[\R]{I(h)[x]}$.

Given an algebraic variety $V$ in $\R^n$, we denote by $I(V)$ the vanishing ideal of $V$, i.e.,
\begin{equation}
I(V):=\{p\in\R[x]\,:\,p(x)=0\,,\,\forall x\in V\}\,.
\end{equation}
Note that $I(V)$ is a real radical ideal.
To compute the generators of $I(V)$ with given polynomials defining $V$, we can use, e.g., Becker--Neuhaus' method in \cite{becker1993computation,neuhaus1998computation}. 
Thereby the degrees of the generators of $I(V)$ are bounded from above by $d^{2^{\mathcal O(n^2)}}$ if the degrees of polynomials defining $V$ are at most $d$.

Given an algebraic variety $V$ in $\R^n$, we say that $V$ is irreducible if there do not exist two proper subvarieties $V_1$, $V_2$ in $\R^n$ such that $V = V_1 \cup V_2$.
\subsection{Regular and singular loci}
Given $h=(h_1,\dots,h_l)$ with $h_j\in\R[x]$, we denote by $J(h)$ the Jacobian matrix associated with $h$, i.e.,
\begin{equation}
J(h)(x):=\left(\frac{\partial h_j}{\partial x_t}(x)\right)_{1\le t\le n,1\le j\le l}\,.
\end{equation}
We define the dimension of an algebraic variety $V$ in $\R^n$, denoted by $\dim(V)$, to be the highest dimension at points at which $V$ is a real submanifold.
For convenience we assume $\dim(\emptyset)=0$ in this paper.

Let $V$ be a algebraic variety in $\R^n$ of dimension $d$.
Let $h_1,\dots,h_l\in\R[x]$ be the generators of $I(V)$.
Set $h=(h_1,\dots,h_l)$.
We say that $a\in V$ is a regular point of $V$ if the Jacobian matrix $J(h)(a)$ has rank $n-d$.
Here the rank of a matrix $M$ with real coefficients is the largest integer $r$ such that all $(r+1)\times(r+1)$ minors of $M$ vanish.
The set $V^\text{reg}$ of all regular point of $V$ is called the regular locus of $V$.
Let $V^{\sing}=V\backslash V^\text{reg}$.
We say that $a\in V$ is a singular point of $V$ if $a\in V^{\sing}$, i.e., the Jacobian matrix $J(h)(a)$ has rank smaller than $n-d$.
The set $V^{\sing}$ is called the singular locus of $V$.
If $V^{\sing}\ne \emptyset$, we say that $V$ is singular. 
Otherwise, $V$ is called to be regular.
The tangent space of $V$ at $a\in V$, denoted by
$T_a(V)$, is the linear subspace of $\R^n$ given by
\begin{equation}\label{eq:tangent}
T_a(V):=\{u\in\R^n\,:\,J(h)(a)^\top u=0\}\,.
\end{equation}
It is not hard to prove that $a\in V$ is a regular point of $V$ iff $\dim(T_a(V))=d=\dim(V)$.

Given $h=(h_1,\dots,h_l)$ with $h_j\in\R[x]$, denote by $m_t(h)(x)$ the vector of $t\times t$ minors of the Jacobian matrix $J(h)(x)$.
Then $m_t(h)$ has length $\binom{n}{t}\times \binom{l}{t}$.
Each entry of $m_t(h)$ is in $\R[x]$ and has degree at most $t\times \max_j\deg(h_j)$.

The following lemma shows how to compute the singular locus of a real algebraic variety:
\begin{lemma}\label{lem:property.sing}
Let $V$ be an algebraic variety in $\R^n$ of dimension $d$.
Let $h_1,\dots,h_l\in\R[x]$ be the generators of $I(V)$.
Set $h=(h_1,\dots,h_l)$.
Let $V^{\sing}$ be the singular locus of $V$.
Then $V^{\sing}$ is an algebraic variety in $\R^n$ defined by $(h,m_{n-d}(h))$.
\end{lemma}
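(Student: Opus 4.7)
The plan is to unpack the definition of $V^{\sing}$ and translate the Jacobian rank condition into a polynomial vanishing condition via minors.

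First, I would observe that since $h_1,\dots,h_l$ generate the vanishing ideal $I(V)$, we have $V=V(h)$, so
\[ V^{\sing}=\{a\in V(h)\,:\,\rank J(h)(a)<n-d\}. \]
It therefore suffices to express the rank inequality $\rank J(h)(a)<n-d$ as the simultaneous vanishing of finitely many polynomials in $a$.

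Next, I would invoke the standard minor characterization of rank: for any real matrix $M$, one has $\rank M<t$ if and only if every $t\times t$ minor of $M$ vanishes. Applying this with $M=J(h)(a)$ and $t=n-d$, the rank condition at $a$ becomes the componentwise vanishing of the vector $m_{n-d}(h)(a)$ whose entries are precisely the $(n-d)\times(n-d)$ minors of $J(h)$ evaluated at $a$.

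Combining the two steps will yield
\[ V^{\sing}=V(h)\cap V(m_{n-d}(h))=V((h,m_{n-d}(h))), \]
displaying $V^{\sing}$ as an algebraic variety defined by the concatenated tuple $(h,m_{n-d}(h))$. Since every entry of $m_{n-d}(h)$ lies in $\R[x]$, this is exactly the form claimed.

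I do not anticipate a serious obstacle: the argument reduces to bookkeeping with definitions together with the standard rank-by-minors fact from linear algebra. The only care needed is matching the paper's rank convention to the equivalent statement ``$\rank M<t$ iff all $t\times t$ minors of $M$ vanish'' and using that $V=V(h)$ holds because the $h_j$ were chosen as generators of $I(V)$ (so that no spurious points are introduced when passing from $V$ to $V(h)$).
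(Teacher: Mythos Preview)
Your proposal is correct and follows the standard argument: unpack the definition of singular point as a rank deficiency of the Jacobian and translate it via the minor characterization of rank into the vanishing of $m_{n-d}(h)$, together with $V=V(h)$ since the $h_j$ generate $I(V)$. The paper does not spell out a proof but simply points to \cite[Section 6.2]{smith2004invitation}, whose argument is exactly this; so your approach coincides with the one the paper intends.
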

The proof of Lemma \ref{lem:property.sing} is similar to the one of \cite[Section 6.2]{smith2004invitation}.

As mentioned in \cite[Remark 3.3.15]{bochnak2013real}, the regular locus $V^{\reg}$ of a variety $V$ is not always dense in $V$.
The following example shows such a situation:
\begin{example}\label{exam:not.dense}
Consider $V$ as the Whitney umbrella, i.e., $V=V(h)$ with $h=x_1^2-x_2^2x_3$. 
By Lemma \ref{lem:property.sing}, the singular locus $V^{\sing}$ of $V$ is the $x_3$-axis, i.e., $V^{\sing}=V(x_1,x_2)$. 
Let us prove that the regular locus $V^{\reg}$ of $V$ is contained in the half space $x_3\ge 0$, i.e., $V^{\reg}\subset\{x\in\R^3\,:\,x_3\ge 0\}$.
Assume by contradiction that $x\in V^{\reg}$ and $x_3< 0$.
Since $h(x)=0$, we get $x_1=x_2=0$.
It implies that $x\in V^{\sing}$, which is impossible since $V^{\reg}=V\backslash V^{\sing}$.
Thus there is no point $x$ in $V^{\sing}$ with $x_3<0$ belonging to the closure of $V^{\reg}$ (by the usual topology).
\end{example}
The readers might wonder if the regular locus $V^{\reg}$ of an irreducible variety $V$ is always dense in $V$.
The answer is still no, as shown in the following example:
\begin{example}\label{exam:not.dense2}
Consider $V$ as the Cartan umbrella, i.e., $V=V(h)$ with $h=x_3(x_1^2+x_2^2)-x_1^3$.
As mentioned in \cite[Example 3.1.2 d)]{bochnak2013real}, $V$ is irreducible.
By Lemma \ref{lem:property.sing}, the singular locus $V^{\sing}$ of $V$ is the $x_3$-axis, i.e., $V^{\sing}=V(x_1,x_2)$.
Let $a\in V^{\sing}$ such that $a_3=1$.
Then $a_1=a_2=0$.
 Let us prove that there is no point in the intersection of $V^{\reg}$ with a sufficiently small neighborhood of $a$.
Let $\varepsilon\in (0,\frac{1}{2})$.
Assume by contradiction that $b$ is in the intersection of $V^{\reg}$ and the open ball centered at the origin of radius $\varepsilon$.
Then $\varepsilon^2>\|a-b\|_2^2=b_1^2+b_2^2+(b_3-1)^2$.
It implies that 
\begin{equation}\label{eq:ineq.esp}
b_3>1-\varepsilon\quad\text{and}\quad \varepsilon>b_1\,.
\end{equation}
Since $b\in V^{\reg}$, we get $b_1^2+b_2^2> 0$ and $b_3=\frac{b_1^3}{b_1^2+b_2^2}$.
From \eqref{eq:ineq.esp}, it follows that 
\begin{equation}\label{eq:ineq.esp.2}
\frac{b_1^3}{b_1^2+b_2^2}>1-\varepsilon\,.
\end{equation}
Since $1-\varepsilon>\frac{1}{2}$, we obtain $b_1>0$.
By \eqref{eq:ineq.esp.2}, we get  
\begin{equation}
b_1^3>(1-\varepsilon)(b_1^2+b_2^2)>(1-\varepsilon)b_1^2
\end{equation}
since $(1-\varepsilon)b_2^2\ge 0$.
It implies that $b_1>1-\varepsilon$.
By \eqref{eq:ineq.esp}, $\varepsilon>b_1>1-\varepsilon$ gives $\varepsilon>\frac{1}{2}$. 
It is impossible since $\varepsilon<\frac{1}{2}$.
\end{example}

However, there are several cases where the regular locus $V^{\reg}$ of $V$ is dense in $V$.
Let us consider the following three examples:
\begin{example}\label{exam:cuspidal.plane}
Let $V$ be the cuspidal plane curve, i.e., $V=V(p)$ with $p=x_1^3 -x_2^2$.
By Lemma \ref{lem:property.sing}, the singular locus $V^{\sing}$ of $V$ is singleton, i.e., $V^{\sing}=\{(0,0)\}$.
Moreover, the regular locus $V^{\reg}$ of $V$ is dense in $V$.
Indeed, it is sufficient to find a regular point of $V$ in any neighborhood of the origin.
Let $\varepsilon>0$.
Set $a=(\varepsilon^{2},\varepsilon^3)$.
Then we get $a_2\ne 0$, $p(a)=0$, and $\|a-0\|_2=\varepsilon^2\sqrt{1+\varepsilon^2}\to 0$  as $\varepsilon\to 0^+$.
It implies that $a$ is in the intersection of $V^{\reg}$ with an arbitrarily small neighborhood of the origin. 
\end{example}

\begin{example}\label{exam:lorentz.cone}
Let $V$ be the boundary of the Lorentz cone in $\R^3$, i.e., $V=V(p)$ with $p=x_1^2 +x_2^2 - x_3^2$.
By Lemma \ref{lem:property.sing}, the singular locus $V^{\sing}$ of $V$ is singleton, i.e., $V^{\sing}=\{(0,0,0)\}$.
Moreover, the regular locus $V^{\reg}$ of $V$ is dense in $V$.
Indeed, it is sufficient to find a regular point of $V$ in any neighborhood of the origin.
Let $\varepsilon>0$.
Set $a=\frac{1}{2}(\varepsilon,\varepsilon,\sqrt{2}\varepsilon)$.
Then we get $a_3\ne 0$, $p(a)=0$, and $\|a-0\|_2=\varepsilon\to 0$ as $\varepsilon\to 0^+$.
It implies that $a$ is in the intersection of $V^{\reg}$ with an arbitrarily small neighborhood of the origin.
\end{example}

\begin{example}\label{exam:infinite.sing}
Let $V=V(p)$ with $p=x_1^2-x_3^2(x_3+x_2^2)$ (modified from \cite[Figure 9]{hauser2003hironaka}).
By Lemma \ref{lem:property.sing}, the surface $V$ has its singular points along
the $x_2$-axis, i.e., $V^{\sing}=\{x\in\R^3\,:\,x_1=x_3=0\}$. 
Let us prove that the regular locus $V^{\reg}$ of $V$ is dense in $V$.
Let $x\in V^{\sing}$. 
Then $x_1=x_3=0$.
We will find a regular point $y$ in any neighborhood of $x$.
Let $\varepsilon>0$.
Set $y_2=x_2$, $y_3=\varepsilon$ and $y_1=\varepsilon\sqrt{\varepsilon+x_2^2}$.
Then we get $y=(y_1,y_2,y_3)\in V^{\reg}$ since $y_3\ne 0$.
Moreover, $\|x-y\|_2=\varepsilon\sqrt{1+\varepsilon+x_2^2}\to 0$ as $\varepsilon\to 0^+$.
It implies that $y$ is in the intersection of $V^{\reg}$ with an arbitrarily small neighborhood of $x$. 
\end{example}

\subsection{Resolution of singularities}

A morphism $\varphi:W\to V$, between two varieties $W\subset \R^t$ and $V\subset \R^n$,
is given by $\varphi(y)=(\varphi_1(y),\dots,\varphi_n(y))$ for some polynomial $\varphi_j\in\R[y]$.
Here $y=(y_1,\dots,y_t)$ is a vector of variables.
If $\varphi_j$s are rational functions in $\R(y)$, $\varphi$ is called a rational map.
In this case, $\varphi$ is well-defined on an open subset of $W$.
A birational map from variety $W\subset \R^t$ to varieties $V\subset \R^t$ is a rational map $\varphi:W\to V$ such that there is a rational map $V\to W$ inverse to $\varphi$. 
We say that $\varphi:W\to V$ is a birational morphism, if $\varphi$ is a morphism which is birational.
An isomorphism is an invertible morphism.
A function $F:X\to Y$ between topological spaces $X$ and $Y$ is called proper if for every compact subset $A\subset Y$, the inverse image $F^{-1}(A)$ is compact in $X$.

We state Hironaka's theorem \cite{hironaka1964resolution,hironaka1964resolution2} in the following lemma (see also \cite[Theorem 1.0.3]{bierstone2011effective}):
\begin{lemma}\label{lem:resol.sing}
Let $V$ be a real algebraic variety with regular locus $V^{\reg}$.
Then there exists a regular variety $W$ together with a proper birational
morphism $\varphi:W\to V$ such that the restriction $U\to V^{\reg}\,,\,y\mapsto \varphi(y)$, is an isomorphism for some $U\subset W$.
\end{lemma}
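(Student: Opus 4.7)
My plan is to derive the lemma as a direct specialization of Hironaka's classical resolution of singularities theorem, invoking the cleaner formulation given by Bierstone and Milman in \cite{bierstone2011effective}. Since the statement is essentially a restatement of a well-known theorem in the language used throughout this paper, the work is verification rather than original discovery, and the ``proof'' is really a careful extraction of the required properties from the cited literature.

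First I would apply \cite[Theorem 1.0.3]{bierstone2011effective}, which constructs the resolution as a finite composition $W = V_r \to V_{r-1} \to \cdots \to V_0 = V$ of blow-ups, each centered along a smooth subvariety contained in the singular locus of the previous stage. This immediately produces a regular variety $W$ together with a morphism $\varphi : W \to V$. Next I would verify the three required properties. Birationality of $\varphi$ follows because each blow-up is birational and birationality is closed under composition. Properness follows because each blow-up is proper (being projective over its base) and composition preserves properness. The isomorphism property over $V^{\reg}$ follows inductively: each blow-up is an isomorphism above the complement of its center, and since every center lies in the relevant singular locus, the composite $\varphi$ is an isomorphism above $V^{\reg}$; setting $U := \varphi^{-1}(V^{\reg})$ then yields the stated isomorphism onto $V^{\reg}$.

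The main obstacle is the passage from the scheme-theoretic or complex-analytic framework in which Hironaka's theorem is traditionally formulated to the real-algebraic setting adopted here. One must ensure that the resolution can be chosen to be defined over $\R$, so that $W$ and $\varphi$ genuinely live in real algebraic geometry, and that the notion of regular point from Section \ref{sec:Preliminaries} (defined via the rank of the Jacobian at real points) aligns with the smoothness notion used in the resolution algorithm. Both points are handled by the canonicity and Galois-equivariance of the Bierstone--Milman procedure: centers chosen according to their algorithm are invariant under complex conjugation, hence defined over $\R$, and the real regular locus coincides with the real points of the scheme-theoretic smooth locus when one works with the real radical vanishing ideal $I(V)$, which is precisely the setup adopted in this paper. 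Once these identifications are in place, the three properties verified above deliver the lemma verbatim.
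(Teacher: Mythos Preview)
Your proposal is correct and matches the paper's approach: the paper does not give an independent proof of this lemma at all but simply states it as Hironaka's theorem, citing \cite{hironaka1964resolution,hironaka1964resolution2} and \cite[Theorem 1.0.3]{bierstone2011effective}. Your write-up is in fact more detailed than the paper's treatment, since you spell out why the composite of blow-ups is proper, birational, and an isomorphism over $V^{\reg}$, and you address the real-versus-complex issue explicitly; the paper leaves all of this implicit in the citation.
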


Roughly speaking about the meaning of Lemma \ref{lem:resol.sing}, any real algebraic variety can be seen as an image of a regular real algebraic variety.
We illustrate the result of Lemma \ref{lem:resol.sing} in the following two examples:

\begin{example}\label{exam:resol.sing.1}
Let $p$, $V$, $V^{\sing}$, and $V^{\reg}$ be as in Example \ref{exam:cuspidal.plane} (see \cite[Section 2.1.4]{abramovich2018resolution}).
Let  $W=V(h)$ with $h=y_1-y_2^2\in\R[y]$.
Here $y=(y_1,y_2)$.
Then the variety $W$ is regular.
Let $\varphi:W\to V$ be the morphism defined by $\varphi(y_1,y_2)=(y_1,y_1y_2)$.
Let us prove that $\varphi$ is well-defined, i.e., $\varphi(W)\subset V$.
Let $x\in \varphi(W)$. Then there is $y\in W$ such that $x=\varphi(y)=(y_1,y_1y_2)$.
It implies that $p(x)=x_1^3-x_2^2=y_1^2(y_1-y_2^2)=y_1^2h(y)=0$, which yields $x\in V$.
Moreover, $\varphi$ is birational since $\varphi^{-1}(x_1,x_2)=(x_1,\frac{x_2}{x_1})$.
Note that $\varphi^{-1}$ is well-defined on $V^{\reg}$.
It is not hard to prove that the restriction $U=W\backslash\{(0,0)\}\to V^{\reg}\,,\,y\mapsto\varphi(y)$, is an isomorphism.
\end{example}
\begin{example}\label{exam:resol.sing}
Let $p$, $V$, $V^{\sing}$, and $V^{\reg}$ be as in Example \ref{exam:lorentz.cone} (see \cite{smith2016jyvaskyla}).
Let $W$ be the boundary of a cylinder in $\R^3$, i.e.,  $W=V(h)$ with $h=y_1^2+y_2^2-1\in\R[y]$.
Here $y=(y_1,y_2,y_3)$.
Then the variety $W$ is regular.
Let $\varphi:W\to V$ be the morphism defined by $\varphi(y_1,y_2,y_3)=(y_1y_3,y_2y_3,y_3)$.
Let us prove that $\varphi$ is well-defined, i.e., $\varphi(W)\subset V$.
Let $x\in \varphi(W)$. Then there is $y\in W$ such that $x=\varphi(y)=(y_1y_3,y_2y_3,y_3)$.
It implies that $y_3=x_3$.
If $x_3=0$, then $y_3=0$ yields $x=0\in V$.
Assume that $x_3\ne 0$.
Then $y_1=\frac{x_1}{x_3}$ and   $y_2=\frac{x_2}{x_3}$.
Since $y\in W$, we get $h(y)=y_1^2+y_2^2-1=0$, which gives $\frac{x_1^2}{x_3^2}+\frac{x_2^2}{x_3^2}-1=0$.
Thus $p(x)=0$ yields $x\in V$.
Moreover, $\varphi$ is birational since $\varphi^{-1}(x_1,x_2,x_3)=(\frac{x_1}{x_3},\frac{x_2}{x_3},x_3)$.
Note that $\varphi^{-1}$ is well-defined on $V^{\reg}$.
Set $C=W\cap \{y\in\R^3\,:\,y_3=0\}$.
It is not hard to prove that the restriction $U=W\backslash C\to V^{\reg}\,,\,y\mapsto\varphi(y)$, is an isomorphism.
Geometrically, the singular locus $V^{\sing}$ of $V$ is replaced with the circle $C$.
\end{example}
We refer the reader to \cite{ellwood2014resolution} for more interesting examples of the resolution of singularities.

\begin{remark}\label{rem:alg.resol}
In \cite{bierstone2011effective}, Bierstone, Grigoriev, Milman, and W{\l}odarczyk provide an algorithm for finding regular variety $W$ and morphism $\varphi$ in Lemma \ref{lem:resol.sing} with given variety $V$. 
They also analyze the complexity of their algorithm.
(We do not cover the algorithm of Bierstone, Grigoriev, Milman, and W{\l}odarczyk in this paper because it is  long and complex.)
The main idea of Hironaka is to blow up the singular locus of a given variety into a projective space. 
Repeating such a step several times allows all singular points of the input variety to be eliminated.

Let us recall the original concept of blow-up in a real projective space.
We first define an equivalence relation $\sim$ to the set $\R^{n}\backslash\{0\}$ by
\begin{equation}
x\sim y \Leftrightarrow \exists\ t \ne 0\,:\, x=ty\,.
\end{equation}
The quotient set $\P^{n-1}=\R^{n}/ \mathord{\sim}$ is said to be a $(n-1)$-dimensional real projective space.
The equivalence class that contains $x\in \R^{n} \backslash\{0\}$ is denoted by $(x_1:\dots:x_n)$.
Let $V$ and $W$ ($V\subset W$) be real algebraic sets in $\R^n$ with $h_1,\dots,h_l$ being the generators of $I(V)$.
The blow-up of $W$ with center $V$, denoted by $B_V(W )$, is defined by the closure
\begin{equation}
B_V(W)=\overline{\{(x,(h_1(x):\dots:h_l(x)))\,:\, x \in W\backslash V \}}^Z
\end{equation}
in the Zariski topology of $\R^n\times\P^l$ (see \cite[Definition 3.13]{watanabe2009algebraic}).
\end{remark}
\begin{remark}\label{rem:de.Jong}
In \cite{de1996smoothness}, de Jong proves that
for any variety $V$, there exists a regular variety $W$ with a proper morphism $\varphi: W\to V$, which is dominant, i.e., $\varphi(W)$ is dense in $V$ (by the usual topology).
As mentioned by Oort \cite{oort1999alterating}, no algorithm produces $W$ and $\varphi$ through de Jong's approach.
\end{remark}
\subsection{The Karush--Kuhn--Tucker conditions}
Given $h_0,h_1,\dots,h_l$ in $\R[x]$, consider the following polynomial optimization problem:
\begin{equation}\label{eq:pop}
    h^\star:=\inf\limits_{x\in V(h)} h_0(x)\,,
\end{equation}
where $V(h)$ is the algebraic variety in $\R^n$ defined by $h=(h_1,\dots,h_l)$.
\begin{remark}\label{rem:convert}
The more general form 
\begin{equation}
\begin{array}{rl}
\inf\limits_{y\in\R^r}&f(y)\\
\text{s.t.}&g_j(y)\ge 0\,,\,j=1,\dots,l\,,
\end{array}
\end{equation}
can be written as an instance of \eqref{eq:pop} by setting $x=(y,z)$, $h_0(x)=f(y)$, $h_j(x)=g_j(y)-z_j^2$.
\end{remark}
Given $p\in\R[x]$, we denote by $\nabla p$ the gradient of $p$, i.e., $\nabla p=(\frac{\partial p}{\partial x_1},\dots,\frac{\partial p}{\partial x_n})$.
We recall the Karush--Kuhn--Tucker conditions in the following lemma:
\begin{lemma}\label{lem:KKT}
Let $h_0$ in $\R[x]$. 
Let $V$ be an algebraic variety in $\R^n$ of dimension $d>0$. 
Let $h_1,\dots,h_l\in\R[x]$ be the generators of $I(V)$.
Let $x^\star$ be a local minimizer for problem \eqref{eq:pop} with $h=(h_1,\dots,h_l)$.
Assume that $J(h)(x^\star)$ has rank $n-d$.
Then the Karush--Kuhn--Tucker conditions hold for problem \eqref{eq:pop} at $x^\star$, i.e.,
\begin{equation}\label{eq:KKT}
    \begin{cases}
    		\exists (\lambda_1^\star,\dots,\lambda_l^\star)\in \R^{l}\,:\\
    		h_j(x^\star)=0\,,\,j=1,\dots,l\,,\\
          \nabla h_0(x^\star)=\sum_{j=1}^l \lambda_j^\star \nabla h_j(x^\star)\,.
    \end{cases}
\end{equation}
\end{lemma}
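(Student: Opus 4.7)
The first condition $h_j(x^\star)=0$ is immediate because $x^\star\in V(h)=V$. So the real content is deriving the Lagrange multiplier identity $\nabla h_0(x^\star)=\sum_{j=1}^l\lambda_j^\star\nabla h_j(x^\star)$ from the optimality of $x^\star$ and the regularity of $V$ at $x^\star$.

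The plan is to use the standard submanifold/tangent-space argument. First, since $J(h)(x^\star)$ has rank $n-d$, the implicit function theorem applied to a suitable $(n-d)\times(n-d)$ nonvanishing minor of $J(h)(x^\star)$ shows that the common zero set of $h_1,\dots,h_l$ agrees locally around $x^\star$ with a smooth $d$-dimensional submanifold of $\R^n$; in particular it coincides locally with $V$, and its tangent space at $x^\star$ is exactly $T_{x^\star}(V)=\{u\in\R^n:J(h)(x^\star)^\top u=0\}$, as defined in \eqref{eq:tangent}. Next, for every $v\in T_{x^\star}(V)$ I would invoke the submanifold structure to pick a smooth curve $\gamma:(-\varepsilon,\varepsilon)\to V$ with $\gamma(0)=x^\star$ and $\gamma'(0)=v$. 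Because $x^\star$ is a local minimizer of $h_0$ on $V$, the scalar function $t\mapsto h_0(\gamma(t))$ attains a local minimum at $t=0$, so $0=\frac{d}{dt}h_0(\gamma(t))\big|_{t=0}=\nabla h_0(x^\star)^\top v$. As $v$ ranges over $T_{x^\star}(V)$, this yields $\nabla h_0(x^\star)\in T_{x^\star}(V)^{\perp}$.

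Finally, a short linear-algebra step finishes the argument: since $T_{x^\star}(V)$ is the kernel of $J(h)(x^\star)^\top$, its orthogonal complement is the column space of $J(h)(x^\star)$, which is precisely the span of $\nabla h_1(x^\star),\dots,\nabla h_l(x^\star)$. Hence there exist $\lambda_1^\star,\dots,\lambda_l^\star\in\R$ with $\nabla h_0(x^\star)=\sum_{j=1}^l\lambda_j^\star\nabla h_j(x^\star)$, which is the desired KKT identity.

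The only delicate point is the reduction to a submanifold. Because the $h_j$ are the generators of $I(V)$ and not merely some defining polynomials, one has to check that the Jacobian condition $\rank J(h)(x^\star)=n-d=\mathrm{codim}\,V$ actually coincides with the classical full-rank hypothesis of the implicit function theorem; this is where the definition of regular point and of $\dim(V)$ via submanifold dimension are used. Once that identification is made, producing the tangent curve is automatic, and the rest of the proof is routine. I would not expect any subtlety from the gradient-orthogonality step or from the linear-algebra conclusion.
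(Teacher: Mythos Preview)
Your proposal is correct and follows essentially the same strategy as the paper: exploit the regularity of $V$ at $x^\star$ to obtain a local smooth submanifold structure, deduce that $\nabla h_0(x^\star)$ is orthogonal to the tangent space $T_{x^\star}(V)=\ker J(h)(x^\star)^\top$, and then identify $T_{x^\star}(V)^\perp$ with the span of the $\nabla h_j(x^\star)$. The only cosmetic difference is that the paper works with a local parametrization $\Phi:U\subset\R^d\to V$ and the chain rule for $h_0\circ\Phi$ and $h_j\circ\Phi$, whereas you use tangent curves $\gamma$ and the chain rule for $h_0\circ\gamma$; these are equivalent standard formulations of the same first-order optimality argument on a manifold.
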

\begin{proof}
By assumption, $x^\star$ is a regular point of the manifold $V\subset \R^n$.
Then there exists a diffeomorphism $\Phi:U\to V$ for some open set $U\subset \R^d$ such that $x^\star=\Phi(t^\star)$ for some $t^\star\in U$.
The differential of $\Phi$ at $t\in U$ is defined by the linear mapping $D \Phi_t: \R^d\to T_{\Phi(t)}V$, $u\mapsto J(\Phi)(t)^\top u$, where $T_{a}V$ is the tangent space of $V$ at $a\in V$ (defined as in \eqref{eq:tangent}).
Since $x^\star$ is a regular points of $V$,  $D \Phi_{t^\star}$ is bijective.
From this, we get $\rank (J(\Phi)(t^\star)) =d$, which gives the null space of $J(\Phi)(t^\star)$, denoted by $	J(\Phi)(t^\star)^\perp$, has dimension $n-d$ thanks to the rank--nullity theorem.
By assumption, $t^\star$ is a local minimizer of $h_0\circ \Phi$ on $U$.
It implies that $0=\nabla (h_0\circ \Phi)(t^\star)=J(\Phi)(t^\star)\times \nabla h_0(x^\star)$, which gives $\nabla h_0(x^\star)$ is in $J(\Phi)(t^\star)^\perp$.
In addition, for $j=1,\dots,l$, $(h_j\circ \Phi)(t)=0$ for all $t\in U$. 
Take the gradient in $t$, we obtain $J(\Phi)(t)\times \nabla h_j(\Phi(t))=0$, for all $t\in U$, for $j=1,\dots,l$.
It implies that $\nabla h_j(\Phi(t))$, $j=1,\dots,l$, are in the null space of $J(\Phi)(t)$, for all $t\in U$.
By assumption, the linear span of $\nabla h_j(x^\star)$, $j=1,\dots,l$, has dimension $n-d$.
Since $J(\Phi)(t^\star)^\perp$ has dimension $n-d$,  $J(\Phi)(t^\star)^\perp$ is the linear span of $\nabla h_j(x^\star)$, $j=1,\dots,l$.
Hence the result follows since $\nabla h_0(x^\star)$ is in $J(\Phi)(t^\star)^\perp$.
\end{proof}
\begin{remark}
As shown in Freund's lecture note \cite[Theorem 11]{freund2004optimality}, the Karush--Kuhn--Tucker conditions \eqref{eq:KKT} hold for problem \eqref{eq:pop} at $x^\star$ when the linear independence constraint qualification is satisfied, i.e., the gradients $\nabla h_j(x^\star)$, $j=1,\dots,l$, are linearly independent in $\R^n$, which is equivalent to that $J(h)(x^\star)$ has rank $l$.
For comparison purposes, we make a weaker assumption in Lemma \ref{lem:KKT} that $J(h)(x^\star)$ has rank $n-\dim(V)$. 
Similarly to \cite[Exercise 17 b, page 495]{cox2013ideals}, we obtain $l\ge n-\dim(V)$ in general.
Note that the twisted cubic $V = \{(t,t^2,t^3)\in \R^3\,:\,t\in \R\}$ is a one-dimensional variety defined by $x_2-x_1^2$ and $x_3-x_1^3$, but the ideal $I(V)$ is generated by the vector of three polynomials $h=(x_1x_3-x_2^2,x_2-x^2_1,x_3-x_1 x_2)$. Thus it holds that $l=3>2=n-\dim(V)$ in this example.
\end{remark}
%Note that the condition \eqref{eq:LIQC} is equivalent to that $x^\star\notin V(m_{l}(h))$, where  $h:=(h_1,\dots,h_l)$.

To prove that the largest rank assumption of $J(h)(x^\star)$ in Lemma \ref{lem:KKT} cannot be removed, consider the following example:
\begin{example}\label{exam:KKT.not.hold}
Let $n=2$, $h_0=x_1$, and $h=x_1^3-x_2^2$.
Then $x^\star=(0,0)$ is the unique global minimizer for problem \eqref{eq:pop}.
Moreover, the Karush--Kuhn--Tucker conditions do not hold for problem \eqref{eq:pop} at $x^\star$.
Indeed, for any $\lambda\in\R$, we get
\begin{equation}
\nabla h_0(x^\star)-\lambda\nabla h(x^\star)=\begin{bmatrix}
1\\
0
\end{bmatrix}-\lambda\begin{bmatrix}
3x_1^{*2}\\
-2x_2^\star
\end{bmatrix}=\begin{bmatrix}
1\\
0
\end{bmatrix}\ne 0\,.
\end{equation}
Note that $V(h)$ has dimension $d=1$, and $J(h)=\nabla h=\begin{bmatrix}
3x_1^{2}\\
-2x_2
\end{bmatrix}$.
It is not hard to check that $J(h)(x^\star)$ has rank  $0< 1=n-d$.
\end{example}

Given $\bar h:=(h_0,h)$ with $h:=(h_1,\dots,h_l)$ and  $h_j\in\R[x]$, we denote by $\bar h_{\KKT}$ the vector of polynomials in $\R[x, \lambda]$ associated with the Karush--Kuhn--Tucker conditions defined by
\begin{equation}
\bar h_{\KKT}:=(h,\nabla h_0-\sum_{j=1}^l \lambda_j \nabla h_j)\,,
\end{equation}
where $\lambda=(\lambda_1,\dots,\lambda_l)$.
The condition \eqref{eq:KKT} can be written as $(x^\star,\lambda^\star)\in V(\bar h_{\KKT})$ for some $\lambda^\star\in\R^l$.
\subsection{Semi-algebraic set}

Given $g=(g_1,\dots,g_m)$ with $g_j\in\R[x]$, we denote by $S(g)$ the basic semi-algebraic set associated with $g$, i.e.,
\begin{equation}
    S(g):=\{x\in\R^n\,:\,g_j(x)\ge 0\,,\,j=1,\dots,m\}\,.
\end{equation}

A semi-algebraic subset of $\R^n$ is a subset of the following form
\begin{equation}\label{eq:def.semi.set}
\bigcup_{i=1}^t\bigcap_{j=1}^{r_i}\{x\in\R^n\,:\,f_{ij}(x)*_{ij}0\}\,,
\end{equation}
where $f_{ij}\in\R[x]$ and $*_{ij}\in\{>,=\}$.
Note that \eqref{eq:def.semi.set} is the union of finitely many basic semi-algebraic sets.

Given two semi-algebraic sets $A\subset \R^n$ and $B\subset \R^m$, we say that a mapping $f : A \to B$ is semi-algebraic if its graph $\{(x,f(x))\,:\,x\in A\}$ is a semi-algebraic set in $\R^{n+m}$.
A semi-algebraic subset $A\subset \R^n$ is said to be semi-algebraically path connected if for every $x,y$ in $A$, there exists a continuous semi-algebraic mapping $\phi:[0,1] \to A$ such that $\phi(0) = x$ and $\phi(1) = y$.
%Note that $\phi$ is piecewise-differentiable in this case (see, e.g., \cite[Theorem 1.8.1]{pham2016genericity}).

The following lemma can be found in \cite[Proposition 1.6.2 (ii)]{pham2016genericity}:
\begin{lemma}\label{lem:composit.semi-al}
Compositions of semi-algebraic maps are semi-algebraic.
\end{lemma}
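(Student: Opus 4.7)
The plan is to exhibit the graph of the composition as a linear projection of a semi-algebraic set built from the graphs of the two given maps, and then invoke the Tarski--Seidenberg projection theorem. More precisely, suppose $f:A\to B$ and $g:B\to C$ are semi-algebraic maps with $A\subset\R^n$, $B\subset\R^m$, and $C\subset\R^p$, so that $\Gamma_f:=\{(x,y)\in\R^{n+m}\,:\,x\in A,\ y=f(x)\}$ and $\Gamma_g:=\{(y,z)\in\R^{m+p}\,:\,y\in B,\ z=g(y)\}$ are semi-algebraic by hypothesis.

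First, I would form the auxiliary set
\begin{equation}
E:=(\Gamma_f\times\R^p)\cap(\R^n\times\Gamma_g)\subset\R^{n+m+p}\,,
\end{equation}
which is semi-algebraic because it is a finite intersection of semi-algebraic sets (and the Cartesian product of a semi-algebraic set with $\R^k$ remains semi-algebraic, as it amounts to appending trivial polynomial conditions to the defining formula of the form \eqref{eq:def.semi.set}). By construction, a triple $(x,y,z)$ lies in $E$ iff $x\in A$, $y=f(x)\in B$, and $z=g(y)=g(f(x))$.

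Next, let $\pi:\R^{n+m+p}\to\R^{n+p}$ be the projection $(x,y,z)\mapsto(x,z)$, which forgets the middle block of coordinates. Then $\pi(E)$ is exactly the graph $\Gamma_{g\circ f}$ of $g\circ f$: indeed, $(x,z)\in\pi(E)$ iff there exists $y\in\R^m$ with $(x,y,z)\in E$, which by the previous paragraph occurs iff $x\in A$ and $z=g(f(x))$. Applying the Tarski--Seidenberg theorem (the image of a semi-algebraic set under a polynomial projection is semi-algebraic) to $E$ then yields that $\Gamma_{g\circ f}$ is semi-algebraic, which is the required conclusion.

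The essentially non-trivial ingredient here is the Tarski--Seidenberg theorem; everything else reduces to manipulation of defining formulas of the type \eqref{eq:def.semi.set}, which is purely bookkeeping. If one wished to avoid a direct appeal to Tarski--Seidenberg, the main obstacle would be establishing projection-closedness from scratch, which requires quantifier elimination for the theory of real closed fields and is the deeper content behind the statement.
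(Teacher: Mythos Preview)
Your argument is correct and is the standard textbook proof: build the fiber product of the two graphs inside $\R^{n+m+p}$ and project out the middle coordinates via Tarski--Seidenberg. The paper does not actually give a proof of this lemma; it simply cites \cite[Proposition 1.6.2 (ii)]{pham2016genericity}, whose proof is essentially the one you have written.
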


The following lemma is given in \cite[Theorem 1.8.1]{pham2016genericity}:
\begin{lemma}\label{lem:semial.func.anal}
Let $f:(a, b)\to \R$ be a semi-algebraic
function. Then there are $a = a_0 < a_1 < \dots < a_s < a_{s+1} = b$ such that, for each $i = 0,\dots, s$, the restriction $f|_{(a_i,a_{i+1})}$ is analytic.
\end{lemma}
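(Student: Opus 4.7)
The plan is to exploit the fact that the graph of a semi-algebraic function lies on an algebraic curve, and then invoke the analytic implicit function theorem on a carefully chosen partition of $(a,b)$.

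First I would observe that the graph $\Gamma:=\{(x,f(x))\,:\,x\in (a,b)\}\subset \R^2$ is a semi-algebraic set of dimension at most one. Any such set is contained in the zero locus of some nonzero polynomial $P(x,y)\in\R[x,y]$: one may take the generator of the vanishing ideal of the Zariski closure of $\Gamma$, which is nonzero because $\Gamma$ is not Zariski-dense in $\R^2$. After replacing $P$ by its square-free part, one has $P(x,f(x))=0$ on $(a,b)$ and $P$ and $\partial P/\partial y$ share no common factor.

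Next I would apply a cylindrical algebraic decomposition (or, equivalently, Thom's lemma) to the finite family of polynomials $\{P,\partial P/\partial y\}$ viewed as elements of $\R[x][y]$. This produces finitely many points $a=a_0<a_1<\dots<a_s<a_{s+1}=b$ such that on each open interval $(a_i,a_{i+1})$ the number of distinct real roots of $y\mapsto P(x,y)$ is constant, these roots depend continuously and semi-algebraically on $x$, and the sign of $\partial P/\partial y$ evaluated at each of these roots is constant. Since $P$ and $\partial P/\partial y$ are coprime, the resultant $R(x):=\operatorname{Res}_y(P,\partial P/\partial y)$ is a nonzero polynomial in $x$; I would include its finitely many real roots among the $a_i$. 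Then on each $(a_i,a_{i+1})$ the value $f(x)$ is a simple root of $P(x,\cdot)$, so $\partial P/\partial y (x,f(x))\ne 0$ throughout.

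Finally, fix $i$ and any $x_0\in(a_i,a_{i+1})$. Since $P$ is a polynomial (in particular real analytic) and $\partial P/\partial y(x_0,f(x_0))\ne 0$, the analytic implicit function theorem yields an open neighbourhood $N$ of $x_0$ and a real analytic function $g:N\to\R$ with $P(x,g(x))=0$ and $g(x_0)=f(x_0)$. By the local uniqueness clause of the theorem together with continuity of $f$ on $(a_i,a_{i+1})$ (which follows from the CAD step, as $f$ coincides with one of the continuously varying roots), $f\equiv g$ on a neighbourhood of $x_0$, so $f$ is analytic at $x_0$. The main obstacle in this route is ensuring that after the partition the chosen branch of $P(x,y)=0$ really is simple along the graph of $f$; this is why the resultant $R$ and the coprimality of $P$ with $\partial P/\partial y$ must be built into the CAD, rather than invoking a generic-point argument.
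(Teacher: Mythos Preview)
The paper does not prove this lemma; it merely cites \cite[Theorem 1.8.1]{pham2016genericity}. So there is no in-paper argument to compare against, and your proposal has to stand on its own.

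Your overall strategy is the standard one and is sound, but there is a genuine gap at the continuity step. You assert that on each interval $(a_i,a_{i+1})$ produced by the CAD of $\{P,\partial P/\partial y\}$ the function $f$ ``coincides with one of the continuously varying roots'' of $P(x,\cdot)$ and is therefore continuous. That does not follow from the CAD you ran: the decomposition of $\{P,\partial P/\partial y\}$ controls how the root functions $\xi_1(x)<\dots<\xi_k(x)$ vary, but it says nothing about \emph{which} root $f(x)$ selects. A semi-algebraic $f$ may jump between branches. Take $f:(-1,1)\to\R$ with $f(x)=0$ for $x<0$ and $f(x)=1$ for $x\ge 0$; the Zariski closure of its graph is $\{y(y-1)=0\}$, the resultant $\operatorname{Res}_y(y^2-y,\,2y-1)$ is a nonzero constant, and your procedure creates no partition point at $x=0$. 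On the single cell $(-1,1)$ your implicit-function argument would try to identify $f$ with one analytic branch, which is impossible.

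The repair is easy but must be stated: feed into the CAD not only $P$ and $\partial P/\partial y$ but also all the polynomials occurring in a quantifier-free description of the graph $\Gamma$. A CAD adapted to $\Gamma$ gives a partition of $(a,b)$ over whose open cells $\Gamma$ is the graph of a single continuous semi-algebraic section (this is precisely what the cell decomposition of a one-dimensional semi-algebraic set delivers). After taking the common refinement with your discriminant/resultant points, $f$ is continuous on each subinterval and $f(x)$ is a simple root of $P(x,\cdot)$ there; then your analytic implicit function theorem argument goes through verbatim. Equivalently, you could note that each set $\{x\in(a_i,a_{i+1}):f(x)=\xi_j(x)\}$ is semi-algebraic in $\R$, hence a finite union of points and intervals, and refine by those endpoints.
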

The following lemma follows from the mean value theorem:
\begin{lemma}\label{lem:mean.val}
Let $f:[0,1]\to \R$ be a continuous piecewise-differentiable function, i.e., there exist $0=a_1<\dots<a_r=1$ such that $f$ is continuous, and $f$ is differentiable on each open interval $(a_i,a_{i+1})$.
Assume that $f$ has zero subgradient.
Then $f(0)=f(1)$.
\end{lemma}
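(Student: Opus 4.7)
The plan is to apply the classical Mean Value Theorem piece by piece on the partition described in the statement, and then chain the resulting equalities together via a telescoping sum.

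More precisely, I would first fix the partition $0=a_1<a_2<\dots<a_r=1$ on which $f$ is continuous on $[0,1]$ and differentiable on each open piece $(a_i,a_{i+1})$. On a given subinterval $[a_i,a_{i+1}]$, the function $f$ meets exactly the hypotheses of the classical Mean Value Theorem: continuous on the closed interval, differentiable on its interior. I would interpret the zero-subgradient assumption as saying that at every point where $f$ is differentiable, its derivative vanishes. The MVT then produces some $c_i\in(a_i,a_{i+1})$ with
\begin{equation}
f(a_{i+1})-f(a_i)=f'(c_i)(a_{i+1}-a_i)=0,
\end{equation}
so that $f(a_{i+1})=f(a_i)$ for each $i=1,\dots,r-1$.

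Telescoping the equalities obtained at each step,
\begin{equation}
f(1)-f(0)=\sum_{i=1}^{r-1}\bigl(f(a_{i+1})-f(a_i)\bigr)=0,
\end{equation}
which yields $f(0)=f(1)$.

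There is no real obstacle here: the only subtlety is the treatment of the corner points $a_2,\dots,a_{r-1}$, where $f$ need not be differentiable. This is automatically handled by the continuity hypothesis, which makes the telescoping sum collapse cleanly without any contribution from those isolated points. In particular, the proof works whether ``zero subgradient'' is read in the classical sense (derivative equal to zero wherever it exists) or in the Clarke/convex-analysis sense, since on each smooth piece both notions reduce to the ordinary derivative being zero.
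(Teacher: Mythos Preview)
Your proposal is correct and matches the paper's own proof essentially line for line: the paper also applies the Mean Value Theorem on each open interval $(a_i,a_{i+1})$ to obtain $f(a_i)=f(a_{i+1})$ and then chains these equalities to conclude $f(0)=f(a_1)=\dots=f(a_r)=f(1)$. Your added remarks on the corner points and the interpretation of ``zero subgradient'' are helpful clarifications but do not change the argument.
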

\begin{proof}
By using the mean value theorem on each open interval $(a_i,a_{i+1})$, we get $f(a_i)=f(a_{i+1})$.
Hence $f(0)=f(a_1)=\dots=f(a_{r})=f(1)$ yields the result.
\end{proof}

Given $n,d,s\in\N$, we define
\begin{equation}\label{eq:bound.connected}
c(n,d,s):=d(2d-1)^{n+s-1}\,.
\end{equation}

We recall in the following lemma the upper bound on the number of connected components of a basic semi-algebraic set is stated by Coste in \cite[Proposition 4.13]{coste2000introduction}:
\begin{lemma}\label{lem:num.connected}
Let $g_1,\dots,g_m,h_1,\dots,h_l\in\R_d[x]$ with $d\ge 2$. 
The number of (semi-algebraically path) connected components of $S(g)\cap V(h)$ is not greater than $c(n,d,m+l)$.
\end{lemma}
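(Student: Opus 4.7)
The plan is to reduce the counting of connected components of the basic semi-algebraic set $S(g)\cap V(h)$ to the corresponding count on a real algebraic variety, and then to invoke the classical Milnor--Thom--Oleinik--Petrovsky estimate. First, I would introduce slack variables $y_1,\dots,y_m$ and define
\begin{equation*}
W := \{(x,y)\in\R^{n+m}\,:\,g_j(x)-y_j^2=0,\ j=1,\dots,m;\ h_k(x)=0,\ k=1,\dots,l\},
\end{equation*}
which is a real algebraic variety in $\R^{n+m}$ cut out by $m+l$ polynomials, each of degree at most $\max(d,2)=d$ since $d\ge 2$ by hypothesis.

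Next, I would show that the projection $\pi:W\to\R^n$, $(x,y)\mapsto x$, yields an upper bound on the count we want. The image of $\pi$ is exactly $S(g)\cap V(h)$: for $x\in S(g)\cap V(h)$ the choice $y_j=\sqrt{g_j(x)}$ produces a preimage in $W$, while conversely any $(x,y)\in W$ forces $g_j(x)=y_j^2\ge 0$ and $h_k(x)=0$. Since $\pi$ is continuous and surjective onto $S(g)\cap V(h)$, every connected component $C$ of $S(g)\cap V(h)$ equals $\pi(\pi^{-1}(C))$ where $\pi^{-1}(C)$ is a nonempty union of connected components of $W$. Consequently, the number of connected components of $S(g)\cap V(h)$ is at most that of $W$.

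Finally, I would apply the Milnor-type bound of Coste \cite[Proposition 4.13]{coste2000introduction} to the variety $W$: any real algebraic variety in $\R^N$ defined by $s$ polynomials of degree at most $d\ge 2$ has at most $d(2d-1)^{N+s-1}$ connected components (which, after reducing the defining system to a single sum of squares of total degree $2d$, can be sharpened further through Milnor's Morse-theoretic argument on a suitable perturbation). Specializing to $N=n+m$ and tracking the $m+l$ defining polynomials yields the claimed bound $c(n,d,m+l)=d(2d-1)^{n+m+l-1}$. The main obstacle is matching the precise exponent in $c(n,d,m+l)$ with the version of the Milnor-type estimate available in the literature: the bound depends on the conventions for how the ambient dimension and the number of defining polynomials enter the exponent, and the slack-variable lift must be balanced against the sum-of-squares reduction so as to reproduce exactly the constant $d(2d-1)^{n+m+l-1}$ rather than a slightly larger or smaller one.
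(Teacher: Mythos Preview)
The paper does not give a proof of this lemma at all; it simply quotes Coste \cite[Proposition 4.13]{coste2000introduction}, whose bound $d(2d-1)^{n+s-1}$ is stated \emph{directly} for a basic semi-algebraic set in $\R^n$ described by $s$ sign conditions (inequalities and equalities combined). No reduction to a variety is required: Coste's proposition already handles $S(g)\cap V(h)$ in the original ambient space with $s=m+l$, which is exactly $c(n,d,m+l)$.

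Your slack-variable lift is a natural idea, but it does not reproduce the stated constant, and the discrepancy you flag in your last paragraph is a real gap, not a matter of conventions. With $N=n+m$ and $s=m+l$ the formula $d(2d-1)^{N+s-1}$ evaluates to $d(2d-1)^{\,n+2m+l-1}$, not $d(2d-1)^{\,n+m+l-1}$; the $m$ extra coordinates cost an extra factor $(2d-1)^m$. The alternative you suggest---collapsing the $m+l$ equations to a single sum of squares of degree $2d$ and invoking Milnor's bound $D(2D-1)^{N-1}$ with $D=2d$---gives $2d(4d-1)^{n+m-1}$, again a different quantity that does not dominate or match $c(n,d,m+l)$ in general. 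So as written your argument proves only a weaker (and differently shaped) bound. To obtain the exact constant $c(n,d,m+l)$ you should apply Coste's result to $S(g)\cap V(h)\subset\R^n$ itself, with the $m+l$ original polynomials, rather than to the lifted variety $W\subset\R^{n+m}$.
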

%Note that \cite[]{bochnak2013real}.
\subsection{Sums of squares}
Denote by $\Sigma^2[x]$ (resp. $\Sigma^2_r[x]$) the cone of sums of squares of polynomials in $\R[x]$ (resp. $\R_r[x]$).
Given $g_1,\dots,g_m\in\R[x]$, let $P_r(g)[x]$ be the truncated preordering of order $r\in\N$ associated with $g=(g_1,\dots,g_m)$, i.e., 
\begin{equation}
    P_r(g)[x]:=\{\sum_{\alpha\in\{0,1\}^m} \sigma_\alpha g^\alpha\,:\,\sigma_\alpha\in\Sigma^2[x]\,,\,\deg(\sigma_\alpha g^\alpha)\le 2r\}\,,
\end{equation}
where $\alpha=(\alpha_1,\dots,\alpha_m)$ and $g^\alpha:=g_1^{\alpha_1}\dots g_m^{\alpha_m}$.
If $m=0$, it holds that $P_r(g)[x]=\Sigma^2_r[x]$.

Given $h_1,\dots,h_l\in\R[x]$, let $I_r(h)[x]$ be the truncated ideal of order $r$ defined by $h$, i.e.,
\begin{equation}
    I_r(h)[x]:= \{\sum_{j=1}^l h_j \psi_j\,:\,\psi_j\in\R[x]\,,\,\deg(h_j \psi_j)\le 2r\}\,.
\end{equation}

We denote by $\bit(d)$ the number of bits of $d\in\N$, i.e.,
\begin{equation}
\bit(d):=
 \begin{cases}
          1 & \text{if } d = 0\,,\\
	    k & \text{if } d \ne 0 \text{ and } 2^{k-1}\le d < 2^k.
         \end{cases}
\end{equation}
Given $n,d,s\in\N$, we define 
\begin{equation}\label{eq:bound.Krivine}
b(n,d,s):=2^{
2^{\left(2^{\max\{2,d\}^{4^{n}}}+s^{2^{n}}\max\{2, d\}^{16^{n}\bit(d)} \right)}}\,.
\end{equation}

We recall the degree bounds for Krivine--Stengle's Nichtnegativstellens\"atze by Lombardi, Perrucci, and Roy \cite{lombardi2020elementary} in the following two lemmas:
\begin{lemma}\label{lem:pos}
Let $g_1,\dots,g_m,h_1,\dots,h_l$ in $\R_d[x]$. 
Assume that $S(g)\cap V(h)=\emptyset$ with $g:=(g_1,\dots,g_m)$ and $h:=(h_1,\dots,h_l)$. 
Set $r=\frac{1}{2}\times b(n,d,m+l+1)$.
Then it holds that $-1 \in P_r(g)[x]+I_r(h)[x]$.
\end{lemma}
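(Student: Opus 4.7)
The plan is to invoke Krivine--Stengle's Positivstellensatz in its effective form, since the statement of the lemma is essentially a repackaging of the explicit degree bound obtained by Lombardi, Perrucci, and Roy \cite{lombardi2020elementary}. The qualitative version of the Positivstellensatz, which is classical, asserts that $S(g)\cap V(h)=\emptyset$ is equivalent to $-1\in P(g)[x]+I(h)[x]$, without any a priori control on the degrees. The quantitative content of the present lemma is only the bound $r=\tfrac{1}{2}\,b(n,d,m+l+1)$ on the order of the certificate.

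First I would apply the classical Positivstellensatz to produce some representation
\[
-1 \;=\; \sum_{\alpha\in\{0,1\}^m}\sigma_\alpha\, g^\alpha \;+\; \sum_{j=1}^l h_j\,\psi_j,
\]
with $\sigma_\alpha\in\Sigma^2[x]$ and $\psi_j\in\R[x]$, and then reduce the problem to controlling $\max_\alpha\deg(\sigma_\alpha g^\alpha)$ and $\max_j\deg(h_j\psi_j)$. The strategy of \cite{lombardi2020elementary} that I would follow is to replace the non-constructive step in Krivine's argument (a Zorn-based existence of a maximal proper cone) by an explicit recursion on the number of variables, grounded in a constructive one-variable Positivstellensatz built from Thom encodings and sign-change counts. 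This makes every step primitive recursive. Writing the resulting bound in the form $b(n,d,s)$ with $s=m+l+1$ (the $+1$ accounts for the redundant constraint $1\ge 0$ that makes the formalism uniform) and dividing by two to convert from a total-degree convention to the ``order'' convention used in the definitions of $P_r(g)[x]$ and $I_r(h)[x]$ yields precisely the $r$ claimed.

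The main obstacle, and the reason for the tower-of-exponentials shape of $b(n,d,s)$ in \eqref{eq:bound.Krivine}, is controlling the degree blow-up in this recursion: each elimination of one variable roughly squares the current degree and inflates the number of atomic polynomials involved by a polynomial factor, so $n$ iterations compound into an iterated exponential. Once the bound of \cite{lombardi2020elementary} is accepted as a black box, the remaining task is purely bookkeeping: verify that the specific function $b(n,d,m+l+1)$ defined in \eqref{eq:bound.Krivine} dominates the bound appearing in their theorem, and that halving it correctly accounts for the shift from total degree to order in the truncated preordering and the truncated ideal. No further argument is needed.
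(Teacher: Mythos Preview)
Your proposal is correct and matches the paper's treatment: the paper does not prove this lemma at all but simply \emph{recalls} it as a direct consequence of the effective Positivstellensatz of Lombardi, Perrucci, and Roy \cite{lombardi2020elementary}, exactly as you describe. Your additional sketch of the recursive strategy behind their bound is accurate but goes beyond what the paper does, which is to treat the result as a black box.
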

\begin{lemma}\label{lem:pos2}
Let $p,g_1,\dots,g_m,h_1,\dots,h_l$ in $\R_d[x]$. 
Assume that $p$ vanishes on $S(g)\cap V(h)$ with $g:=(g_1,\dots,g_m)$ and $h:=(h_1,\dots,h_l)$. 
Set $r:=\frac{1}{2}\times b(n,d,m+l+1)$ and $s:=2\lfloor r/d\rfloor$.
Then it holds that $-p^s \in P_r(g)[x]+ I_r(h)[x]$.
\end{lemma}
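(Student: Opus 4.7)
The plan is to derive this Nullstellensatz from the Positivstellensatz of Lemma \ref{lem:pos} via the Rabinowitsch trick, followed by a substitution-and-clearing argument, with careful bookkeeping of degrees to match the stated bound. First, I would introduce an auxiliary variable $y$ and consider, in $\R^{n+1}$, the system $g_j(x) \ge 0$ for $j = 1,\ldots,m$, $h_j(x) = 0$ for $j = 1,\ldots,l$, together with the extra equation $1 - y\,p(x) = 0$. If $(x_0, y_0)$ were in this set, then $x_0 \in S(g) \cap V(h)$, so $p(x_0) = 0$ by hypothesis; but then $1 - y_0\,p(x_0) = 1 \ne 0$, a contradiction. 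Hence the lifted system is infeasible.

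Second, I would apply Lemma \ref{lem:pos} to this empty system in the variables $(x,y)$, producing a representation
\[
-1 \;=\; \sigma(x,y) \;+\; \tau(x,y)\bigl(1 - y\,p(x)\bigr) \;+\; \sum_{j=1}^{l} h_j(x)\,\psi_j(x,y),
\]
where $\sigma$ is a truncated preordering element in $\R[x,y]$ built from $g_1,\ldots,g_m$ alone, and $\tau,\psi_j \in \R[x,y]$. Substituting $y \mapsto 1/p(x)$ annihilates the middle term, and multiplying through by $p(x)^s$ with $s$ even and chosen large enough to absorb every occurrence of $1/p$ converts each square $q(x,y)^2$ appearing in $\sigma$ into
\[
p(x)^s\,q(x,1/p(x))^2 \;=\; \bigl(p(x)^{s/2}\,q(x,1/p(x))\bigr)^{2},
\]
which is again a square in $\R[x]$ as soon as $s/2 \ge \deg_y(q)$. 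The same manipulation applied to the $\psi_j$-terms yields $-p(x)^s \in P(g)[x] + I(h)[x]$, with $s$ controlled by the $y$-degrees of $\sigma$ and the $\psi_j$.

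The main obstacle is matching the precise bound $r = \tfrac{1}{2}\,b(n,d,m+l+1)$ stated in the lemma. A naive application of Lemma \ref{lem:pos} to the Rabinowitsch system uses $n+1$ variables, maximal degree $d+1$, and $m+l+2$ defining polynomials, giving an estimate governed by $b(n+1,d+1,m+l+2)$, which vastly exceeds the target. Achieving the stated bound demands the finer strategy of Lombardi, Perrucci and Roy: a direct induction paralleling their proof of Lemma \ref{lem:pos} in which the Rabinowitsch variable $y$ is eliminated within the recursion rather than carried along as an extra coordinate, so that both the number of variables and the count of inequalities stay equal to those of the original problem. The calibration $s = 2\lfloor r/d\rfloor$ is then chosen precisely so that $\deg(p^s) \le s\,d \le 2r$, ensuring $-p^s$ lies in $P_r(g)[x] + I_r(h)[x]$.
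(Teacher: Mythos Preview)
The paper does not supply its own proof of Lemma~\ref{lem:pos2}: both Lemma~\ref{lem:pos} and Lemma~\ref{lem:pos2} are stated as recalled results from Lombardi, Perrucci and Roy~\cite{lombardi2020elementary}, with no argument given. So there is no ``paper's proof'' to compare against beyond the citation itself.

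Your Rabinowitsch sketch is the standard route from the Positivstellensatz to the Real Nullstellensatz and is correct as a qualitative argument: it establishes $-p^s \in P(g)[x] + I(h)[x]$ for some even $s$. You have also correctly identified the real issue, namely that a black-box application of Lemma~\ref{lem:pos} to the lifted system in $\R^{n+1}$ would produce a bound governed by $b(n+1,d+1,m+l+2)$, not the $b(n,d,m+l+1)$ claimed. Your final paragraph then defers to the internal recursion of~\cite{lombardi2020elementary} to recover the sharper constant---which is precisely what the paper does by citation. In that sense your proposal and the paper end up in the same place: neither reproduces the actual degree-bound argument, and both rely on~\cite{lombardi2020elementary} for the stated value of $r$. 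If you want a self-contained proof matching the exact bound, you would indeed have to reproduce the relevant portion of the Lombardi--Perrucci--Roy induction rather than layer Rabinowitsch on top of Lemma~\ref{lem:pos}; your proposal correctly diagnoses this but does not carry it out.
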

\subsection{Nichtnegativstellensatz on regular varieties}
\label{sec:proof.rep}
Denote by $|\cdot|$ the cardinality of a set and by $\delta_{ij}$ the Kronecker delta function at $(i,j)\in\N^2$.

We state in the following lemma a sums of squares-based representation with degree bound for a polynomial which has finitely many non-negative values on a real algebraic variety:
\begin{lemma}\label{lem:quadra}
Let $h_0,h_1,\dots,h_l$ in $\R_{d}[x]$. 
Assume that $h_0$ is non-negative on $V(h)$  and $h_0(V(h))$ is finite with $h=(h_1,\dots,h_l)$. 
Set $r:=|h_0(V(h))|$ and $u:=\frac{1}{2}\times b(n,d,l+1)$.
Then there exists $\sigma\in \Sigma^2_w[x]$ with $w=\max\{d(r-1),u+d\}$ such that $h_0 - \sigma$ vanishes on $V(h)$.
\end{lemma}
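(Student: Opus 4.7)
The plan is to construct $\sigma$ explicitly through Lagrange interpolation at the finitely many values that $h_0$ takes on $V(h)$, which will make the bound $d(r-1)$ the active one and render $u+d$ a harmless safety margin in the maximum.

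First I would dispose of the trivial case $V(h)=\emptyset$: here $r=0$, and $\sigma=0$ works vacuously. Assuming $V(h)\neq\emptyset$, I would enumerate
\[
h_0(V(h))=\{a_1,\dots,a_r\},\qquad 0\le a_1<a_2<\dots<a_r,
\]
where non-negativity of each $a_i$ is exactly the hypothesis $h_0\ge 0$ on $V(h)$. Since the $a_i$ are pairwise distinct, there is a unique Lagrange interpolation polynomial $p(t)\in\R[t]$ of degree at most $r-1$ satisfying $p(a_i)=\sqrt{a_i}$ for every $i$, namely
\[
p(t)=\sum_{i=1}^{r}\sqrt{a_i}\prod_{j\ne i}\frac{t-a_j}{a_i-a_j}.
\]
The real square roots here exist precisely because of the non-negativity hypothesis; this is the one place where the assumption on $h_0$ is used.

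The candidate is then $\sigma:=p(h_0)^2\in\Sigma^2[x]$. For any $x\in V(h)$ one has $h_0(x)=a_i$ for some index $i$, so $\sigma(x)=p(a_i)^2=(\sqrt{a_i})^2=a_i=h_0(x)$, showing that $h_0-\sigma$ vanishes on $V(h)$. The degree check is
\[
\deg(\sigma)\le 2\deg(p)\cdot\deg(h_0)\le 2d(r-1)\le 2w,
\]
so $\sigma\in\Sigma^2_w[x]$, as required.

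There is no genuine obstacle in this argument: the observation that \emph{finitely many non-negative values} is precisely the setting in which one can interpolate with square roots and then square to get both the sum-of-squares structure and the pointwise agreement with $h_0$ on $V(h)$ collapses the lemma to a one-line construction. The alternative bound $u+d$ inside the maximum defining $w$ plays no role in the proof; it is kept in the statement only so that $w$ dominates both a potentially small $d(r-1)$ (e.g.\ when $r\in\{0,1\}$) and the Krivine--Stengle-type degrees $u+d$ that will appear when this lemma is combined with Lemmas \ref{lem:pos} and \ref{lem:pos2} in the later Nichtnegativstellensatz.
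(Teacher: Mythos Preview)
Your argument is correct. Both you and the paper rely on Lagrange interpolation in the finitely many values $h_0(V(h))=\{t_1,\dots,t_r\}\subset[0,\infty)$, but the two constructions differ in detail. The paper, for $r>0$, takes the Lagrange basis polynomials $p_i(x)=\prod_{j\ne i}\frac{h_0(x)-t_j}{t_i-t_j}$ and sets $\sigma=\sum_{i=1}^r t_i\,p_i^2$, which is an SOS because each weight $t_i$ is non-negative; you instead interpolate $\sqrt{t_i}$ by a single univariate $p$ and put $\sigma=p(h_0)^2$, obtaining the same degree bound $2d(r-1)$ with a single square. Both exploit non-negativity in exactly one spot (weights $t_i\ge 0$ versus existence of $\sqrt{t_i}$). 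For the empty case $r=0$ the paper actually invokes Lemma~\ref{lem:pos} to manufacture a certificate $-1\in\Sigma^2_u[x]+I_u(h)[x]$ and builds $\sigma\in\Sigma^2_{u+d}[x]$ from it, whereas you observe---correctly---that $\sigma=0$ already satisfies the conclusion vacuously. Your treatment is therefore shorter, and it shows that the term $u+d$ in $w$ is not forced by this lemma's proof at all; in the paper it only becomes relevant through the explicit Case~1 construction, which is in fact dispensable.
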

\begin{proof}
Consider the following two cases:
\begin{itemize}
\item Case 1: $r=0$. It is obvious that  $V(h)=\emptyset$.
Lemma \ref{lem:pos} says that $-1 \in \Sigma^2_u[x]+I_u(h)[x]$.
It implies that there exists $q \in P_u(g)[x]$ such that
$-1 = q$ on $V(h)$. 
We write $h_0 = s_1 - s_2$, where $s_1 =(h_0+\frac{1}{2})^2$ and $s_2 = h_0^2+\frac{1}{4}$ are in $\Sigma^2[x]$.
From this we get $h_0 = s_1 + q s_2$ on $V(h)$.
Letting $\sigma = s_1 +qs_2$ gives $\sigma\in \Sigma^2_{u+d}[x]\subset \Sigma^2_{w}[x]$ since $w\ge u+d$.
Thus $h_0-\sigma$ vanishes on $V(h)$.

\item Case 2: $r>0$. By assumption, we can assume that
$h_0(V(h)) = \{t_1 ,\dots, t_r \} \subset [0,\infty)$,
where $t_i\ne t_j$ if $i\ne j$.
For $j=1,\dots,r$, let
$W_j:=V(h,h_0-t_j)$.
Then $W_j$ is a real variety defined by $l+1$ polynomials in $\R_{d}[x]$.
It is clear that $h_0(W_j)=\{t_j\}$.
Define the following polynomials:
\begin{equation}\label{eq:lagrange.pol}
p_j(x):=\prod_{i\ne j}\frac{h_0(x)-t_i}{t_j-t_i}\,,\,j=1,\dots,r\,.
\end{equation}
It is easy to check that $p_j(W_i)=\{\delta_{ji}\}$ and $\deg(p_j)\le d(r-1)$.
Note that $h_0=t_i\ge 0$ on $W_i$, for $i=1,\dots,r$.  
Now letting $\sigma =\sum_{i=1}^r t_i p_i^2$,
we obtain $\sigma\in \Sigma^2_w[x]$ since $\deg(t_i p_i^2)\le 2\deg(p_i)\le 2d(r-1)\le 2w$.
Hence $h_0 - \sigma$ vanishes on $V(h)=W_1\cup\dots\cup W_r$, yielding the result.
\end{itemize}
\end{proof}

The following lemma is similar to \cite[Lemma 3.3]{demmel2007representations} but is proved by using the tools from real algebraic geometry (instead of the ones from complex algebraic geometry):
\begin{lemma}\label{lem:constant.KKT}
Let $h_1,\dots,h_l$ in $\R[x]$.
Let $h_0$ be a polynomial in $\R[x]$.
Let $W$ be a semi-algebraically path connected component of $V(\bar h_{\KKT})$, where $\bar h:=(h_0,\dots,h_l)$. Then $h_0$ is constant on $W$.
\end{lemma}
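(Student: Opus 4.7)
The plan is to reduce the statement to a one-dimensional mean-value argument along an arbitrary semi-algebraic path inside $W$. Concretely, I would pick two arbitrary points $(x^{(0)},\lambda^{(0)}),(x^{(1)},\lambda^{(1)})\in W$ and, using semi-algebraic path connectivity, choose a continuous semi-algebraic map $\phi=(\phi_x,\phi_\lambda):[0,1]\to W\subset\R^{n+l}$ with $\phi(0)=(x^{(0)},\lambda^{(0)})$ and $\phi(1)=(x^{(1)},\lambda^{(1)})$. Setting $f(t):=h_0(\phi_x(t))$, Lemma \ref{lem:composit.semi-al} gives that $f:[0,1]\to\R$ is continuous and semi-algebraic, so the whole problem reduces to proving $f(0)=f(1)$.

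Next I would pass to analyticity: applying Lemma \ref{lem:semial.func.anal} to each scalar component $\phi_{x,1},\dots,\phi_{x,n},\phi_{\lambda,1},\dots,\phi_{\lambda,l}$ and to $f$, and taking the common refinement of the resulting finite partitions of $(0,1)$, produces points $0=a_0<a_1<\dots<a_{s+1}=1$ such that $\phi_x$, $\phi_\lambda$ and $f$ are all analytic on every open piece $(a_i,a_{i+1})$. In particular, $\phi_x$ is differentiable there, and classical differentiation rules apply to $f$.

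On a fixed open piece $(a_i,a_{i+1})$ I would exploit both halves of the KKT system. Since $\phi(t)\in W\subset V(\bar h_{\KKT})$, the identity $h_j(\phi_x(t))=0$ holds for $j=1,\dots,l$ and for every $t$ in the interval; differentiating in $t$ yields $\nabla h_j(\phi_x(t))\cdot \phi_x'(t)=0$ for each $j$. Combining this with the KKT relation
\begin{equation}
\nabla h_0(\phi_x(t))=\sum_{j=1}^l \phi_{\lambda,j}(t)\,\nabla h_j(\phi_x(t))
\end{equation}
(valid for all $t\in(a_i,a_{i+1})$ since $\phi(t)\in V(\bar h_{\KKT})$) and taking the inner product with $\phi_x'(t)$ gives
\begin{equation}
f'(t)=\nabla h_0(\phi_x(t))\cdot\phi_x'(t)=\sum_{j=1}^l \phi_{\lambda,j}(t)\bigl(\nabla h_j(\phi_x(t))\cdot\phi_x'(t)\bigr)=0.
\end{equation}

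Thus $f$ is continuous on $[0,1]$ and differentiable with zero derivative on each open subinterval $(a_i,a_{i+1})$, so Lemma \ref{lem:mean.val} yields $f(0)=f(1)$, i.e. $h_0(x^{(0)})=h_0(x^{(1)})$. Since the two points were arbitrary in $W$, the conclusion follows. The only delicate point is the refinement step ensuring analyticity of every coordinate of $\phi$ simultaneously, so that $f'(t)$ may be expanded by the chain rule and the KKT linear combination applied componentwise; once this is in place the computation above collapses the derivative to zero and the argument is routine.
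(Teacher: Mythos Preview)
Your argument is correct and follows essentially the same route as the paper: pick a semi-algebraic path, use piecewise analyticity (Lemma~\ref{lem:semial.func.anal}) together with the KKT relations to kill the derivative, and conclude with Lemma~\ref{lem:mean.val}. The only cosmetic difference is that the paper packages the computation through the Lagrangian $L(x,\lambda)=h_0(x)-\sum_j\lambda_jh_j(x)$, which equals $h_0$ along the path and whose full $(x,\lambda)$-gradient vanishes identically on $V(\bar h_{\KKT})$, so the chain-rule step collapses in one line; your direct computation with $f=h_0\circ\phi_x$ achieves the same thing and is in fact more explicit about why $\phi$ may be differentiated componentwise.
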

\begin{proof} 
Recall $\lambda:=(\lambda_1,\dots,\lambda_m)$.
Choose two arbitrary points $(x^{(0)},\lambda^{(0)})$, $(x^{(1)},\lambda^{(1)})$ in $W$. 
We claim that $h_0(x^{(0)}) = h_0(x^{(1)})$.
By assumption, there exists a continuous semi-algebraic mapping $\phi:[0,1]\to W$ defined by $\phi(\tau) = (x(\tau),\lambda(\tau))$ such that $\phi(0) = (x^{(0)},\lambda^{(0)})$ and $\phi(1) = (x^{(1)},\lambda^{(1)})$. 
We claim that $\tau\mapsto h_0(x(\tau))$ is constant on $[0,1]$.
The Lagrangian function
\begin{equation}\label{eq:Lagran.KKT}
    L(x,\lambda) := h_0(x)-\sum_{j=1}^m \lambda_j h_j (x)\,
\end{equation}
is equal to $h_0(x)$ on $V(\bar h_{\KKT})$, which contains $\phi([0,1])$. 
%Let $\mu_j(\tau)$ be the principal square root of $\lambda_j(\tau)$ for $\tau\in[0,1]$, $j=1,\dots,l$. 
By Lemma \ref{lem:composit.semi-al}, the function $L\circ \phi$ is semi-algebraic.
Moreover, the function $L\circ \phi$ is continuous since $L$ and $\phi$ are continuous.
It implies that $L\circ \phi$ is a continuous piecewise-differentiable function thanks to Lemma \ref{lem:semial.func.anal}.
Note that the function
$L\circ \phi$
has zero subgradient on $[0,1]$.
From Lemma \ref{lem:mean.val}, it follows that $h_0(x (0) )=(L\circ \phi)(0)= (L\circ \phi)(1)= h_0(x (1) )$.
We now obtain $h_0(x^{(0)})$ = $h_0(x^{(1)})$, and hence $h_0$ is constant on $W$.
\end{proof}

Based on the Karush--Kuhn--Tucker conditions, we state in the following theorem the sums of squares-based representation with degree bound for a polynomial non-negative on algebraic varieties:
\begin{theorem}\label{theo:rep.KKT}
Let $h_0,h_1,\dots,h_l$ be polynomial in $\R_d[x]$  with $d\ge 2$.
Assume that $h_0$ is non-negative on $V(h)$ with $h=(h_1,\dots,h_l)$.
Set $\bar h:=(h_0,h)$, $\lambda:=(\lambda_1,\dots,\lambda_l)$ and
\begin{equation}\label{eq:def.w.KKT}
w:=\max\{d\times (c(n+l,d,n+l)-1),\frac{1}{2}\times b(n+l,d,l+n+1)+d\}\,,
\end{equation}
where $c(\cdot)$ and $b(\cdot)$ are defined as in \eqref{eq:bound.connected} and  \eqref{eq:bound.Krivine}, respectively.
Then the following statements hold:
\begin{enumerate}
\item The cardinality of $h_0(V(\bar h_{\KKT}))$ is at most $c(n+l,d+1,n+l)$.
\item There exists $\sigma\in \Sigma^2_w[x, \lambda]$ such that $h_0-\sigma$ vanishes on $V(\bar h_{\KKT})$.
\end{enumerate}
\end{theorem}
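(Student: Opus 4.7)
The plan is to combine the three ingredients already developed: Lemma \ref{lem:num.connected} to bound the number of semi-algebraically path connected components of $V(\bar h_{\KKT})$, Lemma \ref{lem:constant.KKT} to conclude that $h_0$ is constant on each such component (so that $h_0(V(\bar h_{\KKT}))$ is finite), and finally Lemma \ref{lem:quadra} to produce the desired sum-of-squares representative $\sigma$.

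First I would verify the degree/dimension bookkeeping. The polynomial vector $\bar h_{\KKT}$ consists of $l$ equations $h_j$ of degree at most $d$, together with the $n$ coordinates of $\nabla h_0-\sum_{j=1}^l\lambda_j\nabla h_j$, each of which has degree at most $d$ in $(x,\lambda)$, since $\deg \nabla h_j\le d-1$ and the extra factor $\lambda_j$ raises the degree by one. Thus $V(\bar h_{\KKT})\subset\R^{n+l}$ is defined by $n+l$ polynomials of degree at most $d$ in $n+l$ variables. Applying Lemma \ref{lem:num.connected} with parameters $(n+l,d,n+l)$ (note that $d\ge 2$ is in force) gives at most $c(n+l,d,n+l)\le c(n+l,d+1,n+l)$ semi-algebraically path connected components, and Lemma \ref{lem:constant.KKT} says that $h_0$ is constant on each one. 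Consequently $|h_0(V(\bar h_{\KKT}))|\le c(n+l,d+1,n+l)$, which proves the first statement.

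For the second statement, I first observe that $h_0$ is non-negative on $V(\bar h_{\KKT})$. Indeed, for any $(x^\star,\lambda^\star)\in V(\bar h_{\KKT})$ one has $h_j(x^\star)=0$ for every $j$, i.e.\ $x^\star\in V(h)$, and the hypothesis gives $h_0(x^\star)\ge 0$. Hence $h_0$, viewed as an element of $\R_d[x,\lambda]$ (which does not depend on $\lambda$), is non-negative on the algebraic variety $V(\bar h_{\KKT})\subset\R^{n+l}$, and by the first part this image is finite of cardinality $r\le c(n+l,d,n+l)$. I then invoke Lemma \ref{lem:quadra} in the ring $\R[x,\lambda]$ with $n'=n+l$ variables, $l'=n+l$ defining polynomials, and degree bound $d$; this yields $\sigma\in\Sigma^2_{w_0}[x,\lambda]$ with $w_0=\max\{d(r-1),u+d\}$ and $u=\tfrac{1}{2}b(n+l,d,n+l+1)$, such that $h_0-\sigma$ vanishes on $V(\bar h_{\KKT})$. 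Using $r-1\le c(n+l,d,n+l)-1$ and comparing with the definition \eqref{eq:def.w.KKT} of $w$, one sees $w_0\le w$, so $\sigma\in\Sigma^2_w[x,\lambda]$ as required.

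The expected main obstacle is not any single deep step, but rather the careful degree/variable bookkeeping: making sure that the passage from $\R[x]$ to $\R[x,\lambda]$ is compatible with Lemmas \ref{lem:num.connected}--\ref{lem:quadra} (ambient dimension becomes $n+l$, not $n$; the number of generators becomes $n+l$, not $l$), and that the max-degree of the Lagrangian-gradient equations really is $d$ rather than $d+1$ (which relies on the elementary but essential fact that $\deg\nabla h_j\le d-1$). Once this accounting is nailed down, the result is a direct assembly of the three lemmas.
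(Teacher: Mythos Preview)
Your proposal is correct and follows essentially the same approach as the paper's proof: bound the number of connected components of $V(\bar h_{\KKT})$ via Lemma~\ref{lem:num.connected}, use Lemma~\ref{lem:constant.KKT} to get finiteness of $h_0(V(\bar h_{\KKT}))$, and then apply Lemma~\ref{lem:quadra} in the ambient ring $\R[x,\lambda]$ with the appropriate parameter substitutions. The degree bookkeeping you highlight (that the gradient drops the degree by one, so the $\lambda_j\nabla h_j$ terms still have degree at most $d$) is exactly the point needed to feed the correct parameters into the lemmas.
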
 
\begin{proof}
Using Lemma \ref{lem:num.connected}, we decompose $V(\bar h_{\KKT})$ into semi-algebraically path connected components:
$Z_1,\dots,Z_s$ with  
\begin{equation}\label{eq:bound.on.s.KKT}
s\le c(n+l,d,n+l)\,,
\end{equation}
since each entry of $\bar h_{\KKT}$ has degree at most $d\ge 2$.
Accordingly Lemma \ref{lem:constant.KKT} shows that $h_0$ is constant on each $Z_i$.
Thus $h_0(V(\bar h_{\KKT}))$ is finite.
Set $r=|h_0(V(\bar h_{\KKT}))|$.
From \eqref{eq:bound.on.s.KKT}, we get 
\begin{equation}\label{eq:ineq.KKT}
r\le s\le c(n+l,d,n+l)\,.
\end{equation}
Set 
\begin{equation}
u:=\frac{1}{2}\times b(n+l,d,l+n+1)\,.
\end{equation}
By using Lemma \ref{lem:quadra}, there exists $\sigma\in \Sigma^2_\xi[x,\lambda]$ with $\xi=\max\{d(r-1),u+d\}$ such that $h_0 - \sigma$ vanishes on $V(\bar h_{\KKT})$.
By \eqref{eq:ineq.KKT} and \eqref{eq:def.w.KKT}, we get $\xi\le w$, and hence $\sigma\in \Sigma^2_w[x,\lambda]$. 
\end{proof}

\subsection{Exact semidefinite programs in the regular case}

We recall some preliminaries  of the Moment-SOS relaxations originally developed by Lasserre in \cite{lasserre2001global}.
Given $d\in\N$, let $\N^n_d:=\{\alpha\in\N^n\,:\,\sum_{j=1}^n \alpha_j\le d\}$.
Given $d\in\N$, we denote by $v_d$ the vector of monomials in $x$ of degree at most $d$, i.e., $v_d=(x^\alpha)_{\alpha\in\N^n_d}$ with $x^\alpha:=x_1^{\alpha_1}\dots x_n^{\alpha_n}$.
For each $p\in\R_d[x]$, we write $p=c(p)^\top v_d=\sum_{\alpha\in\N^n_d}p_\alpha x^\alpha$, where $c(p)$ is denoted by the vector of coefficient of $p$, i.e., $c(p)=(p_\alpha)_{\alpha\in\N^n_d}$ with $p_\alpha\in\R$.
Given $A\in\R^{r\times r}$ being symmetric, we say that $A$ is positive semidefinite, denoted by $A\succeq 0$, if every eigenvalue of $A$ is non-negative.

Given $y=(y_\alpha)_{\alpha\in\N^n}\subset \R$, let $L_y:\R[x]\to\R$ be the Riesz linear functional defined by $L_y(p)=\sum_{\alpha\in\N^n} p_\alpha y_\alpha$ for every $p\in\R[x]$.
Given $d\in\N$, $p\in\R[x]$ and $y=(y_\alpha)_{\alpha\in\N^n}\subset \R$, let $M_d(y)$ be the moment matrix of order $d$ defined by $(y_{\alpha+\beta})_{\alpha,\beta\in\N^n_d}$.

The following lemma shows the connection between sums of squares and semidefinite programming (see, e.g., \cite[Proposition 2.1]{lasserre2015introduction}):
\begin{lemma}\label{lem:sdp.sos}
Let $\sigma\in\R[x]$ and $d\in\N$ such that $2d\ge\deg(\sigma)$.
Then $\sigma\in\Sigma^2[x]$ iff there exists $G\succeq 0$ such that $\sigma=v_d^\top Gv_d$.
\end{lemma}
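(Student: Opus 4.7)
The plan is to prove the two directions separately, starting with the easier reverse implication and then handling the forward direction where a small but essential degree-truncation step is needed.

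For the reverse direction $(\Leftarrow)$, I would assume that $\sigma=v_d^\top G v_d$ with $G\succeq 0$. Since $G$ is a real symmetric positive semidefinite matrix of size $|\N^n_d|\times|\N^n_d|$, I would invoke the spectral decomposition (or, equivalently, a Cholesky/$LL^\top$ factorization) to write $G=\sum_{i=1}^{r} u_i u_i^\top$ for some vectors $u_i\in\R^{|\N^n_d|}$, where $r=\rank(G)$. Substituting this decomposition yields
\begin{equation*}
\sigma=v_d^\top G v_d=\sum_{i=1}^{r}(u_i^\top v_d)^2,
\end{equation*}
and each $u_i^\top v_d$ is a polynomial in $\R_d[x]\subset\R[x]$, so $\sigma\in\Sigma^2[x]$.

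For the forward direction $(\Rightarrow)$, I would start from an SOS decomposition $\sigma=\sum_{i=1}^r p_i^2$ with $p_i\in\R[x]$. The crucial preliminary step is to argue that every $p_i$ can be assumed to have degree at most $d$. Suppose for contradiction that some $p_i$ has degree $e>d$; then $2e>2d\ge \deg(\sigma)$, and the top-degree homogeneous component of $\sum p_i^2$ must vanish. But that top component equals $\sum_{i:\deg(p_i)=e}(p_i^{\mathrm{top}})^2$, a sum of squares of nonzero homogeneous polynomials which cannot be zero unless each $p_i^{\mathrm{top}}=0$, a contradiction. Hence $\deg(p_i)\le d$ for all $i$, and I may write $p_i=c(p_i)^\top v_d$ with $c(p_i)\in\R^{|\N^n_d|}$. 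Setting $G:=\sum_{i=1}^{r} c(p_i) c(p_i)^\top$ gives $\sigma=v_d^\top G v_d$, and $G\succeq 0$ as a sum of rank-one positive semidefinite matrices.

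The only nontrivial step is the degree-truncation argument in the forward direction; everything else is linear algebra. I would keep that argument short by noting that in a sum of squares the leading homogeneous parts cannot cancel, since the space of real polynomials satisfies $\sum q_i^2=0\Rightarrow q_i=0$ for all $i$. With this in hand the equivalence drops out from the identity $v_d^\top(\sum_i u_i u_i^\top)v_d=\sum_i (u_i^\top v_d)^2$, which supplies both directions of the correspondence between PSD Gram matrices and SOS decompositions.
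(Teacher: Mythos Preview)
Your proof is correct and is the standard Gram-matrix argument. The paper itself does not supply a proof of this lemma; it merely cites \cite[Proposition 2.1]{lasserre2015introduction}, so your write-up actually goes beyond what the paper does by filling in the details (including the degree-truncation step showing each $p_i$ has degree at most $d$), and it matches the classical argument one finds in that reference.
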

Given $k\in\N$ and $h_0,h_1,\dots,h_l\in\R[x]$, consider the following primal-dual semidefinite programs associated with $\bar h=(h_0,h_1,\dots,h_l)$:
\begin{equation}\label{eq:mom.relax}
\begin{array}{rl}
\tau_k(\bar h):=\inf\limits_y& L_y(h_0)\\
\text{s.t} &M_k(y)\succeq 0\,,\\
&M_{k-r_t}(h_ty)=0\,,\,t=1,\dots,l\,,\,y_0=1\,,
\end{array}
\end{equation}
\begin{equation}\label{eq:sos.relax}
\begin{array}{rl}
\rho_k(\bar h):=\sup\limits_{\xi,G,u_t} & \xi\\
\text{s.t} & G\succeq 0\,,\\
&h_0-\xi=v_k^\top Gv_k+\sum_{t=1}^l h_tu_t^\top v_{2(k-r_t)}\,,\\
\end{array}
\end{equation}
where $r_t=\lceil \deg(h_t)/2\rceil$.
Using Lemma \ref{lem:sdp.sos}, we obtain 
\begin{equation}\label{eq:equi.sos}
\rho_k(\bar h)=\sup_{\xi\in\R}\{ \xi\,:\,h_0-\xi\in \Sigma_k[x]+I_k(h)[x]\}\,.
\end{equation}
Primal-dual semidefinite programs \eqref{eq:mom.relax}-\eqref{eq:sos.relax} is known as the Moment-SOS relaxations of order $k$ for problem \eqref{eq:pop}.

We state in the following lemma some recent results involving the Moment-SOS relaxations:
\begin{lemma}\label{lem:mom.sos}
Let $h_0,h_1,\dots,h_l\in\R_{d}[x]$. 
Let $h^\star$ be as in \eqref{eq:pop}  with $h=(h_1,\dots,h_l)$. 
Set $\bar h=(h_0,h)$.
Assume that $h^\star>-\infty$.
Then the following statements hold:
\begin{enumerate}
\item For every $k\in\N$, $\tau_k(\bar h)\le \tau_{k+1}(\bar h)$ and $\rho_k(\bar h)\le \rho_{k+1}(\bar h)$.
\item For every $k\in\N$, $\rho_k(\bar h)\le \tau_{k}(\bar h)\le h^\star$.
\item If there exists $q\in \Sigma_w[x]$ with $2w\ge d$ such that $h_0-h^\star-q$ vanishes on $V(h)$, then  $\rho_r(\bar h)=h^\star$ with $r=\frac{1}{2}\times b(n,2w,l+1)+d$, where  $b(\cdot)$ is defined as in   \eqref{eq:bound.Krivine}.
\end{enumerate}
\end{lemma}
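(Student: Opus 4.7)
The plan is to dispatch items (1) and (2) with standard Lasserre-hierarchy arguments, then treat (3) by combining Lemma \ref{lem:pos2} with the hypothesized SOS term $q$.

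For (1) and (2), I would argue as follows. Monotonicity in (1) follows from inclusion of feasible sets in both directions: any moment sequence $y$ feasible at level $k+1$ restricts to a feasible sequence at level $k$ with the same objective, since $M_k(y) \succeq 0$ and $M_{k-r_t}(h_t y) = 0$ are submatrix restrictions of the level-$(k+1)$ constraints; and for the SOS side the cone inclusion $\Sigma_k[x] + I_k(h)[x] \subset \Sigma_{k+1}[x] + I_{k+1}(h)[x]$ together with the characterization \eqref{eq:equi.sos} yields $\rho_k(\bar h) \le \rho_{k+1}(\bar h)$. For (2), the inequality $\rho_k \le \tau_k$ is SDP weak duality: given feasible $(\xi, G, u_t)$ and $y$, one expands
\begin{equation*}
L_y(h_0) - \xi = L_y(v_k^\top G v_k) + \sum_{t=1}^l L_y\!\left(h_t u_t^\top v_{2(k-r_t)}\right) = \trace(G\, M_k(y)) + \sum_{t=1}^l u_t^\top L_y(h_t v_{2(k-r_t)})\,,
\end{equation*}
in which the first term is non-negative as the trace of the product of two PSD matrices and each summand of the second vanishes because the localizing constraint forces $M_{k-r_t}(h_t y) = 0$. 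For $\tau_k \le h^\star$, either $V(h) = \emptyset$ and the inequality is trivial, or for each $x^\star \in V(h)$ the Dirac moment sequence $y_\alpha = (x^\star)^\alpha$ is feasible (with $M_k(y) = v_k(x^\star) v_k(x^\star)^\top$ and $h_t(x^\star) = 0$) with objective $h_0(x^\star)$; taking the infimum over $V(h)$ yields the bound.

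For (3), I would set $p := h_0 - h^\star - q$, which has degree at most $2w$ since $2w \ge d$, and apply Lemma \ref{lem:pos2} to $p$, which vanishes on $V(h)$, with no inequality constraints (so $P_{r_0}(g)[x] = \Sigma_{r_0}[x]$). This produces an even integer $s$ and a certificate $-p^s \in \Sigma_{r_0}[x] + I_{r_0}(h)[x]$ with $r_0 = \frac{1}{2}\, b(n, 2w, l+1)$. The goal is to combine this certificate with the SOS $q$ into a feasible decomposition $h_0 - h^\star = \sigma + \phi$ with $\sigma \in \Sigma_r[x]$ and $\phi \in I_r(h)[x]$ at the claimed level $r = r_0 + d$, which certifies that $\xi = h^\star$ is feasible for \eqref{eq:sos.relax}, forcing $\rho_r(\bar h) \ge h^\star$; combining with (2) then gives $\rho_r(\bar h) = h^\star$. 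The main obstacle is that Lemma \ref{lem:pos2} provides a certificate for the power $-p^s$ rather than for $p$ itself, so bridging to the required SOS-plus-ideal representation of $h_0 - h^\star$ requires exploiting that $q$ is SOS and that $h_0 - h^\star$ already coincides with $q$ along $V(h)$; the defect $p$ is then absorbed into the ideal part by pairing the $-p^s$ identity with appropriate multiples of $q$, and the extra summand $d$ in $r = r_0 + d$ reflects the degree of $h_0$ that enters when this pairing is carried out.
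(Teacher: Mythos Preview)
Your handling of (1) and (2) is standard and correct. For (3), you have the right setup --- define $p := h_0 - h^\star - q$ and invoke Lemma~\ref{lem:pos2} to obtain an even exponent $s$ and $\sigma_0 \in \Sigma^2_{r_0}[x]$ with $p^{s} + \sigma_0 \in I_{r_0}(h)[x]$, where $r_0 = \tfrac{1}{2}\,b(n,2w,l+1)$ --- but the concluding step is a genuine gap. You aim to exhibit $h_0 - h^\star = \sigma + \phi$ with $\sigma \in \Sigma^2_r[x]$ and $\phi \in I_r(h)[x]$, i.e.\ to show that $\xi = h^\star$ is \emph{feasible} in \eqref{eq:sos.relax}. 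That is strictly stronger than $\rho_r(\bar h) = h^\star$ and is false in general: with $n=1$, $h = (x^3)$, $h_0 = x^2 + x$ (so $h^\star = 0$), $q = x^2$, $p = x$, one has $h_0 - h^\star - \psi(x)\,x^3 = x\bigl(1 + x - \psi(x)x^2\bigr)$, which is negative for all sufficiently small $x<0$ whatever polynomial $\psi$ is chosen; hence $h_0 - h^\star \notin \Sigma^2_r[x] + I_r(h)[x]$ for any $r$. Your phrase ``absorbed into the ideal part by pairing the $-p^s$ identity with appropriate multiples of $q$'' does not describe a working construction, and in the actual argument $q$ plays no role in the bridge beyond contributing one SOS summand.

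The missing idea is Nie's $\varepsilon$-perturbation, which is what the paper uses. For even $s$ and a suitable constant $c>0$ the univariate polynomial $1 + t + c\,t^{s}$ lies in $\Sigma^2[t]$, and for every $\varepsilon>0$ one checks the identity
\[
h_0 - h^\star + \varepsilon \;=\; q \;+\; \varepsilon\Bigl(1 + \tfrac{p}{\varepsilon} + c\bigl(\tfrac{p}{\varepsilon}\bigr)^{s}\Bigr) \;-\; c\,\varepsilon^{\,1-s}\bigl(p^{s} + \sigma_0\bigr) \;+\; c\,\varepsilon^{\,1-s}\sigma_0\,.
\]
The first, second, and fourth summands are sums of squares and the third lies in $I_{r_0}(h)[x]$; hence $h_0 - (h^\star - \varepsilon) \in \Sigma^2_r[x] + I_r(h)[x]$ for every $\varepsilon>0$, so $\rho_r(\bar h) \ge h^\star - \varepsilon$ for all $\varepsilon$, whence $\rho_r(\bar h) \ge h^\star$, and equality then follows from (2). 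You also omit the case $V(h) = \emptyset$ (so $h^\star = +\infty$), which the paper treats separately via Lemma~\ref{lem:pos}; the extra ``$+\,d$'' in the definition of $r$ is needed for that case, not for the pairing you sketch.
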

\begin{proof}
The proofs of the first two statements are trivial.
Let us use Nie's technique in \cite[Proof of Theorem 1.1]{nie2014optimality} to prove the third statement. 
Consider the following two cases:
\begin{itemize}
\item Case 1: $V(h)= \emptyset$. 
Then we get $h^\star=\infty$.
Set $s=\frac{1}{2}\times b(n,d,l+1)$.
Then Lemma \ref{lem:pos2} says that $-1 \in \Sigma^2_s[x]+I_s(h)[x]$.
For all $\xi\ge 0$, it holds that
\begin{equation}
h_0-\xi=(1+\frac{h_0}{4})^2-(\xi+(1-\frac{h_0}{4})^2)\in \Sigma_{s+d}[x]+I_{s+d}(h)[x]\,.
\end{equation}
Since $r\ge s+d$, it implies that for all $\xi\ge 0$, $h_0-\xi\in \Sigma_{r}[x]+I_{r}(h)[x]$, which yields that $\xi$ is a feasible solution for \eqref{eq:equi.sos} of the value $\rho_r(\bar h)$.
Thus we obtain that $\rho_r(\bar h)=\infty= h^\star$.
\item Case 2: $V(h)\ne \emptyset$. 
Then we get $h^\star<\infty$.
Set $u=h_0-h^\star-q$.
By assumption,we get $u\in\R_{2w}[x]$ and $u=0$ on $V(h)$. 
Set $s=2\lfloor (r-d)/(2w)\rfloor$.
From this, Lemma \ref{lem:pos2} says that there exist  $\sigma \in \Sigma_{r-d}^2[x]$ such that $u^{2s} + \sigma \in I_{r-d}(h)[x]$.
Let $c=\frac{1}{2s}$. 
Then it holds that $1+t+ct^{2s}\in\Sigma^2_s[t]$.
Thus for all $\varepsilon>0$, we have
\begin{equation}
\begin{array}{rl}
h_0-h^\star+\varepsilon&=q + \varepsilon(1+\frac{u}{\varepsilon}+c\left(\frac{u}{\varepsilon}\right)^{2s})-c\varepsilon^{1-2s}(u^{2s} + \sigma ) +c\varepsilon^{1-2s}\sigma\\
&\in \Sigma^2_{r-d}[x]+I_{r-d}(h)[x]\subset \Sigma^2_{r}[x]+I_{r}(h)[x]\,.
\end{array}
\end{equation}
Then we for all $\varepsilon>0$, $h^\star-\varepsilon$ is a feasible solution of \eqref{eq:equi.sos} of the value $\rho_r(\bar h)$.
It gives $\rho_r(\bar h)\ge h^\star-\varepsilon$, for all $\varepsilon>0$, and, in consequence, we get $\rho_r(\bar h)\ge h^\star$.
Using the second statement, we obtain that $\rho_r(\bar h)= h^\star$, yielding the third statement.
\end{itemize}
\end{proof}

We apply Theorem \ref{theo:rep.KKT} for polynomial optimization as follows:
\begin{theorem}\label{theo:pop.KKT}
Let $h_0,h_1,\dots,h_l\in\R_d[x]$. 
Let $h^\star$ be as in problem \eqref{eq:pop} with $h=(h_1,\dots,h_l)$.
Assume that problem \eqref{eq:pop} has a global minimizer at which the Karush--Kuhn--Tucker conditions hold for this problem.
Let $w$ be as in \eqref{eq:def.w.KKT}.
Set 
\begin{equation}\label{eq:r.formula}
r:=\frac{1}{2}\times b(n+l,2w,l+n+1)+d\,.
\end{equation}
Then $\rho_r(h_0,\bar h_{\KKT})=h^\star$, where $\bar h:=(h_0,h)$ and $b(\cdot)$ is defined as in  \eqref{eq:bound.Krivine}.
\end{theorem}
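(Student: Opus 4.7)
The plan is to reduce Theorem \ref{theo:pop.KKT} to Lemma \ref{lem:mom.sos}(3) applied to the augmented polynomial optimization problem over $V(\bar h_{\KKT})$, using the sums-of-squares representation produced by Theorem \ref{theo:rep.KKT}. The whole strategy turns on the observation that $\bar h_{\KKT}$ is the natural vehicle for converting the unconstrained $h_0$-minimization into a variety-constrained problem whose feasible set already captures the first-order optimality information.

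First I would establish that the infimum of $h_0$ over $V(\bar h_{\KKT})$ (viewed as a variety in the extended space $\R^{n+l}$ with coordinates $(x,\lambda)$) is exactly $h^\star$, and is attained. On the one hand, any $(x,\lambda)\in V(\bar h_{\KKT})$ has $h_j(x)=0$ for every $j$, so $x\in V(h)$ and hence $h_0(x)\ge h^\star$. On the other hand, the KKT hypothesis at some global minimizer $x^\star$ furnishes multipliers $\lambda^\star\in\R^l$ with $(x^\star,\lambda^\star)\in V(\bar h_{\KKT})$ and $h_0(x^\star)=h^\star$. Hence $h_0-h^\star\ge 0$ on $V(\bar h_{\KKT})$, and in particular the polynomial $h_0-h^\star\in\R_d[x,\lambda]$ is non-negative on this variety.

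Next I would invoke Theorem \ref{theo:rep.KKT} applied to the vector $(h_0-h^\star,h_1,\dots,h_l)$ in place of $\bar h$. The key (and very cheap) observation is that the KKT vector associated to this shifted objective coincides with the original $\bar h_{\KKT}$, because $\nabla(h_0-h^\star)=\nabla h_0$; thus the variety $V(\bar h_{\KKT})$ and the degree parameter $d$ are unchanged. Theorem \ref{theo:rep.KKT}(2) therefore yields some $\sigma\in\Sigma^2_w[x,\lambda]$, with $w$ exactly the value in \eqref{eq:def.w.KKT}, such that $(h_0-h^\star)-\sigma$ vanishes on $V(\bar h_{\KKT})$.

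Finally I would apply Lemma \ref{lem:mom.sos}(3) to the polynomial optimization problem $\inf\{h_0(x):(x,\lambda)\in V(\bar h_{\KKT})\}$, treating $(x,\lambda)\in\R^{n+l}$ as the variable vector and $\bar h_{\KKT}$ as the constraint system. Here the number of variables is $n+l$, the constraint system consists of $l+n$ polynomials (the $l$ polynomials $h_j$ and the $n$ components of $\nabla h_0-\sum_j\lambda_j\nabla h_j$), and each of these has degree at most $d$, since $\deg(\lambda_j\nabla h_j)\le 1+(d-1)=d$. The hypothesis $2w\ge d$ needed by Lemma \ref{lem:mom.sos}(3) is immediate from $w\ge d$ via \eqref{eq:def.w.KKT}. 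Substituting $(n,d,l)\mapsto(n+l,d,l+n)$ into the formula $\frac{1}{2}b(n,2w,l+1)+d$ of Lemma \ref{lem:mom.sos}(3) gives precisely the $r$ in \eqref{eq:r.formula}, so $\rho_r(h_0,\bar h_{\KKT})=h^\star$. The bulk of the work has already been done upstream, so the only remaining care is the parameter bookkeeping in this substitution; I would expect that to be the main — and only — technical check in the proof.
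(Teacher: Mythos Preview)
Your proposal is correct and follows essentially the same route as the paper: show that $h^\star$ is the (attained) minimum of $h_0$ on $V(\bar h_{\KKT})$, apply Theorem~\ref{theo:rep.KKT} to the shifted objective $h_0-h^\star$ to produce $\sigma\in\Sigma^2_w[x,\lambda]$ with $(h_0-h^\star)-\sigma$ vanishing on $V(\bar h_{\KKT})$, and then feed this into Lemma~\ref{lem:mom.sos}(3) with the substitution $(n,l)\mapsto(n+l,l+n)$ to read off the value of $r$. One small remark: the hypothesis Theorem~\ref{theo:rep.KKT} actually needs is that $h_0-h^\star\ge 0$ on $V(h)$ (not merely on $V(\bar h_{\KKT})$), but this is immediate from the definition of $h^\star$, so no gap arises.
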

\begin{proof}
By assumption, there exists $(x^\star,\lambda^\star)\in V(\bar h_{\KKT})$ such that $x^\star$ is a global minimizer of \eqref{eq:pop}.
It implies that
\begin{equation}
\begin{array}{rl}
h^\star:=\min\limits_{x,\lambda}& h_0(x)\\
\text{s.t.}& (x,\lambda)\in V(\bar h_{\KKT})\,,
\end{array}
\end{equation}
By assumption, Theorem \ref{theo:rep.KKT} yields that there exists $q\in \Sigma^2_w(g)[x,\lambda]$ such that $h_0-h^\star-q$ vanishes on $V(\bar h_{\KKT})$.
Applying the third statement of Lemma \ref{lem:mom.sos}, we obtain the conclusion.
\end{proof}
\begin{remark}\label{rem:not.attained}
Let $h=(h_1,\dots,h_l)$ with $h_j\in\R[x]$ and let $h^\star$ be as in \eqref{eq:pop}.
Assume that $h^\star$ is finite but is not attained i.e., $h_0(x)-h^\star>0$ for all $x\in V(h)$. (For instance, we can take (i) $h_0=x_1$ and $h=(x_1x_2^2-1)$ or (ii) $h_0=(x_1x_2-1)^2 + x_1^2$ and $h=(0)$.)
By Theorem \ref{theo:rep.KKT}, the set $h_0(V(\bar h_{\KKT}))-h^\star$ (with $\bar h:=(h_0,h)$) has a finite number of values but does not have zero value.
It is because of 
\begin{equation}
0\notin h_0(V(h))-h^\star\supset h_0(V(\bar h_{\KKT}))-h^\star\,.
\end{equation}
It implies that $\inf (h_0(V(\bar h_{\KKT}))-h^\star)=\delta>0$, so that $\inf h_0(V(\bar h_{\KKT}))=h^\star+\delta$.
Note that $\delta=\infty$ iff $h_0(V(\bar h_{\KKT}))=\emptyset$.
Thus we obtain $\rho_r(h_0,\bar h_{\KKT})=h^\star+\delta>h^\star$, where $w$ is as in \eqref{eq:def.w.KKT} and $r$ is as in \eqref{eq:r.formula}.

To address this attainability issue, the author proves in \cite{mai2022semi} that every polynomial optimization problem of the form \eqref{eq:pop} with finite infimum value $h^\star$ can be symbolically transformed to an equivalent problem in one-dimensional space with attained optimal value $h^\star$.
To do this, he uses quantifier elimination and algebraic algorithms that rely on the fundamental theorem of algebra and the greatest common divisor.
Let $d$ be the upper bound on the degrees of $h_j$. 
His symbolic algorithm has complexity $O(d^{O(n)})$ to produce the objective and constraint polynomials of degree at most $d^{O(n)}$ for the equivalent problem.
\end{remark}

\section{Nichtnegativstellensatz on singular varieties}
\label{sec:represent.sing}

We provide in the following theorem the sums of squares-based representations on varieties whose singular loci are dense:

\begin{theorem}\label{theo:rep.nonneg}
Let $V$ be a real algebraic variety in $\R^n$ with regular locus $V^{\reg}$ and $f$ be a polynomial in $\R[x]$.
Assume that $V^{\reg}$ is dense in $V$.
Then the following statements hold:
\begin{enumerate}
\item There exists a regular variety $W$ in $\R^{t}$ together with a proper birational morphism $\varphi:W\to V$ such that the restriction $U\to V^{\reg}\,,\,y\mapsto \varphi(y)$, is an isomorphism for some $U\subset W$.
Moreover, $W$ and $\varphi$ are algorithmically obtained.
\item Set $h_0=f\circ \varphi $.
Suppose that $h_0$ attains its infimum value on $W$.
Let $h_1,\dots,h_l\in\R[y]$ be the generators of the vanishing ideal $I(W)$.
Let $d\in\N$ with $d\ge 2$ be the upper bound on the degrees of $h_j$s.
Set $h=(h_1,\dots,h_l)$, $\bar h:=(h_0,h)$, $\lambda:=(\lambda_1,\dots,\lambda_l)$ and
\begin{equation}\label{eq:def.u.KKT}
u:=\max\{d\times (c(t+l,d,t+l)-1),\frac{1}{2}\times b(t+l,d,l+t+1)+d\}\,,
\end{equation}
where $c(\cdot)$ and $b(\cdot)$ are defined as in \eqref{eq:bound.connected} and  \eqref{eq:bound.Krivine}, respectively.
Then the following conditions are equivalent:
\begin{enumerate}
\item The polynomial $f$ is non-negative on $V$.
\item There exists $\sigma\in \Sigma^2_u[x, \lambda]$ such that $h_0-\sigma$ vanishes on $V(\bar h_{\KKT})$.
\end{enumerate}
\end{enumerate}
\end{theorem}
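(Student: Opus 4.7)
The plan is to transfer the problem from the singular variety $V$ to the regular variety $W$ via the morphism $\varphi$ from resolution of singularities, and then invoke the KKT-based Nichtnegativstellensatz already developed for regular varieties (Theorem \ref{theo:rep.KKT}). The first statement follows immediately from Hironaka's theorem as recorded in Lemma \ref{lem:resol.sing}, with the algorithmic aspect supplied by the Bierstone--Grigoriev--Milman--W{\l}odarczyk procedure cited in Remark \ref{rem:alg.resol}; no additional argument is required.

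For the forward direction (a) $\Rightarrow$ (b), the key observation is that $\varphi(W)\subset V$ since $\varphi$ is a morphism into $V$. Hence $f \geq 0$ on $V$ forces $h_0 = f\circ\varphi \geq 0$ on $W = V(h)$. Because $h_1,\dots,h_l$ generate $I(W)$ and have degrees at most $d\ge 2$, the hypotheses of Theorem \ref{theo:rep.KKT} are met with $n$ replaced by $t$. Applying that theorem directly produces the desired $\sigma \in \Sigma^2_u[y,\lambda]$ with $h_0-\sigma$ vanishing on $V(\bar h_{\KKT})$, where $u$ is exactly the quantity in \eqref{eq:def.u.KKT}.

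For the reverse direction (b) $\Rightarrow$ (a), I would argue as follows. By hypothesis, $h_0$ attains its infimum on $W$ at some $y^\star$. Since $W$ is regular and $h_1,\dots,h_l$ generate $I(W)$, the Jacobian $J(h)(y^\star)$ has rank $t-\dim(W)$ by the definition of regular point given in Section \ref{sec:Preliminaries}. Lemma \ref{lem:KKT} therefore supplies $\lambda^\star\in\R^l$ with $(y^\star,\lambda^\star)\in V(\bar h_{\KKT})$. Evaluating the identity $h_0 = \sigma$ on $V(\bar h_{\KKT})$ at this point yields $h_0(y^\star) = \sigma(y^\star,\lambda^\star) \geq 0$, so $h_0 \geq 0$ on all of $W$. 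Using that the restriction $U\to V^{\reg}$ of $\varphi$ is an isomorphism, we conclude $f \geq 0$ on $V^{\reg}$. Finally, continuity of $f$ together with the density hypothesis $\overline{V^{\reg}} = V$ promotes this to $f \geq 0$ on $V$.

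The step I expect to be most delicate is the reverse direction, specifically the application of Lemma \ref{lem:KKT} at $y^\star$: one must keep track of the fact that $h_1,\dots,h_l$ are the generators of the vanishing ideal $I(W)$ (not merely some defining equations of $W$), since the rank condition on the Jacobian is stated in those terms. The degenerate case $\dim(W)=0$ must also be checked, but then $W$ is a finite set of regular points and both implications are elementary. Everything else in the argument is routine transfer along $\varphi$ plus continuity, so the substantive content of the theorem really is the combination of Hironaka's resolution with Theorem \ref{theo:rep.KKT}.
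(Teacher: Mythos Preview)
Your proposal is correct and follows essentially the same approach as the paper: the paper introduces an intermediate condition (c) ``$f\circ\varphi$ is non-negative on $W$'' and proves (a) $\Leftrightarrow$ (c) $\Leftrightarrow$ (b), but the ingredients---$\varphi(W)\subset V$, Theorem \ref{theo:rep.KKT}, regularity of $W$ combined with Lemma \ref{lem:KKT} at a minimizer, the isomorphism onto $V^{\reg}$, and density plus continuity---are exactly the ones you use, in the same order. Your remark about the degenerate case $\dim(W)=0$ is a reasonable aside, since Lemma \ref{lem:KKT} is stated only for positive dimension and the paper's proof does not explicitly address this.
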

\begin{proof}
The first statement is given in Lemma \ref{lem:resol.sing} and Remark \ref{rem:alg.resol}.
Let us prove the second one.
We claim that (a) is equivalent to condition (c) saying that $f\circ \varphi$ is non-negative on $W$.
Since $\varphi(W)\subset V$, (a) implies (c).
Assume that (c) holds.
We claim (d) saying that $f$ is non-negative on $V^{\reg}$.
Indeed, take $x\in V^{\reg}$. 
Since the restriction $U\to V^{\reg}\,,\,y\mapsto \varphi(y)$, is an isomorphism, there exists $y\in U\subset W$ such that $x=\varphi(y)$, which gives $f(x)=(f\circ \varphi)(y)\ge 0$ by (c).
Let $z\in V$.
Since $V^{\reg}$ is dense in $V$, there is a sequence $(z^{(j)})_{j=1}^\infty\subset V^{\reg}$ which converges to $z$.
By (d), $f(z^{(j}))\ge 0$ for $j=1,2,\dots$.
By the continuity of $f$, we get $f(z)\ge 0$, which implies (a).

We now prove that (c) is equivalent to (b).
By Theorem \ref{theo:rep.KKT}, (c) implies (b).
Assume that (b) holds.
Note that $W=V(h)$.
Let $h^\star$ be as in \eqref{eq:pop} and $y^\star$ is a global minimizer of \eqref{eq:pop}.
Since $W$ is regular, the Jacobian matrix $J(h)(y^\star)$ has rank $n-\dim(W)$.
From this, Lemma \ref{lem:KKT} yields that the Karush--Kuhn--Tucker conditions hold for problem \eqref{eq:pop} at $y^\star$.
It means that there exists $(y^\star,\lambda^\star)\in V(\bar h_{\KKT})$.
From this and (b), we obtain $h^\star=h_0(y^\star)=\sigma(y^\star,\lambda^\star)\ge 0$, which implies (c).

Since (a) is equivalent to (c) and (c) is equivalent to (b), (a) is equivalent to (b), yielding the result. 
\end{proof}
\begin{remark}
 In Theorem \ref{theo:rep.nonneg}, the denseness assumption of $V^{\reg}$ in $V$ does not always hold even if $V$ is irreducible, as shown in Examples \ref{exam:not.dense} and \ref{exam:not.dense2}.
(However, if $\bar V$ is an irreducible complex algebraic variety, the regular locus of $\bar V$ is dense in $\bar V$ (see \cite[Chapter 7, Section 2.1, Lemma 1]{igor1974basic}).)
Examples \ref{exam:cuspidal.plane}, \ref{exam:lorentz.cone}, and \ref{exam:infinite.sing} indicate cases where this assumption holds even if the singular locus $V^{\sing}$ of $V$ is infinite.
\end{remark} 
\begin{remark}
In Theorem \ref{theo:rep.nonneg}, it still holds when we remove the denseness assumption of $V^{\reg}$ in $V$ and modify condition (a) as ``The polynomial $f$ is non-negative on $V^{\reg}$.''.
However, we cannot characterize the non-negativity of $f$ on $V^{\sing}$ in this case.
Here $V^{\sing}$ stands for the singular locus of $V$.
\end{remark}
\begin{remark}
In Theorem \ref{theo:rep.nonneg}, it still holds when we remove the denseness assumption of $V^{\reg}$ in $V$ and add the assumption saying that $\varphi$ is surjective i.e., $V=\varphi(W)$.
In this case, we get $f(V)=(f\circ \varphi)(W)$, which implies the equivalence of the non-negativities of $f$ on $V$ and $f\circ \varphi$ on $W$.
Moreover, this additional assumption can be replaced with ``$\varphi$ is dominant'', i.e., $V\subset \overline{\varphi(W)}$.
Here $\overline{A}$ represents for the closure of a subset $A\subset \R^n$ (by the usual topology).
\end{remark}
\begin{remark}
In Theorem \ref{theo:rep.nonneg}, it still holds when we remove the denseness assumption of $V^{\reg}$ in $V$ and replace the first statement with ``There exists a regular variety $W$ in $\R^{t}$ together with a proper dominant morphism $\varphi:W\to V$.'' (see Remark \ref{rem:de.Jong}).
Note that no algorithm allows us to get such $W$ and $\varphi$.
\end{remark}
We illustrate the result of Theorem \ref{theo:rep.nonneg} in the following simple example:
\begin{example}\label{exam:rep}
Let $f=x_1$, and $p,h,V,W,V^{\reg},V^{\sing},\varphi$ be as in Example \ref{exam:resol.sing.1}.
Then $f\ge 0$ on $V$.
The unique real zeros of $f$ on $V$ is the origin, which is the unique singular point of $V$.
By Example \ref{exam:KKT.not.hold}, the Karush--Kuhn--Tucker conditions do not hold for problem $\min\limits_{x\in V(p)} f(x)$ at this point.
Note that $V=V(p)$.
Set $h_0=f\circ \varphi$. Then $h_0(y)=y_1$.
It is clear that $h_0$ is non-negative on $W$ and has unique real zero $(0,0)$ on $W$.
Moreover, the Karush--Kuhn--Tucker conditions hold for problem $\min\limits_{y\in V(h)} h_0(y)$ at this point.
Note that $W=V(h)$.
Set $\bar h=(h_0,h)$.
Theorem \ref{theo:rep.nonneg} yields that there exists $\sigma\in \Sigma^2_u[y,\lambda]$ with $u\in\N$ such that $h_0-\sigma$ vanishes on $V(\bar h_{\KKT})$ with $\bar h_{\KKT}$ being given by
\begin{equation}
\bar h_{KKT}=(h,\nabla h_0-\lambda\nabla h)=(y_1-y_2^2,\begin{bmatrix}
1\\
0
\end{bmatrix}-\lambda\begin{bmatrix}
1\\
-2y_2
\end{bmatrix})\,.
\end{equation}
Let $(y,\lambda)\in V(\bar h_\text{KKT})$.
Then we get $\lambda=1$ and $y_2=0$, which gives $y_1=0$. 
Thus give $V(\bar h_{\KKT})=\{(0,0,1)\}$.
It is not hart to check that $\sigma=0$ satisfies $\sigma\in \Sigma^2_u[y,\lambda]$ and $h_0-\sigma$ vanishes on $V(\bar h_{\KKT})$.
\end{example}

\section{Exact semidefinite programs in the singular case}
\label{sec:POP.general}

We present the application of the resolution of singularities to polynomial optimization in the following theorem:

\begin{theorem}\label{theo:pop}
Let $V$ be a real algebraic variety in $\R^n$ with regular locus $V^{\reg}$ and $q$ be a polynomial in $\R[x]$.
Assume that $V^{\reg}$ is dense in $V$. 
Consider polynomial optimization problem:
\begin{equation}\label{eq:ori.pop}
h^\star:=\inf_{x\in V} q(x)\,.
\end{equation}
Then the following statements hold:
\begin{enumerate}
\item There exists a regular variety $W$ in $\R^{t}$ together with a proper birational morphism $\varphi:W\to V$ such that the restriction $U\to V^{\reg}\,,\,y\mapsto \varphi(y)$, is an isomorphism for some $U\subset W$.
Moreover, $W$ and $\varphi$ are algorithmically obtained.
\item Set $h_0=q\circ \varphi $.
Suppose that $h_0$ attains its infimum value on $W$.
Let $h_1,\dots,h_l\in\R[y]$ be the generators of the vanishing ideal $I(W)$.
Let $d\in\N$ with $d\ge 2$ be the upper bound on the degrees of $h_j$s.
Set $h=(h_1,\dots,h_l)$, $\bar h:=(h_0,h)$, $\lambda:=(\lambda_1,\dots,\lambda_l)$ and $u$ be as in \eqref{eq:def.u.KKT}.
Set 
\begin{equation}\label{eq:r.formula.app}
r:=\frac{1}{2}\times b(t+l,2u,l+t+1)+d\,.
\end{equation}
Then $\rho_r(h_0,\bar h_{\KKT})=h^\star$.
\end{enumerate}
\end{theorem}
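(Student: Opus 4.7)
The first statement is a direct restatement of Hironaka's theorem (Lemma \ref{lem:resol.sing}) together with the algorithmic content from Remark \ref{rem:alg.resol}, so nothing new is needed there. The plan for the second statement is to reduce the singular problem \eqref{eq:ori.pop} on $V$ to the regular problem
\begin{equation*}
\inf_{y\in W} h_0(y)
\end{equation*}
on the regular variety $W=V(h)$, and then invoke Theorem \ref{theo:pop.KKT}.

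First I would show the key equality
\begin{equation*}
\inf_{y\in W} h_0(y)\;=\;h^\star.
\end{equation*}
The inclusion $\varphi(W)\subset V$ yields $\inf_W h_0\ge h^\star$ immediately. For the reverse direction, the restriction $U\to V^{\reg}$ is an isomorphism, so $\varphi(U)=V^{\reg}$ and hence $\inf_U h_0=\inf_{V^{\reg}} q\ge \inf_W h_0$. Combining this with the denseness of $V^{\reg}$ in $V$ (by the usual topology) and the continuity of the polynomial $q$, I obtain $\inf_{V^{\reg}} q=\inf_V q=h^\star$. Chaining these inequalities forces equality.

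Next, by hypothesis $h_0$ attains its infimum on $W$, so there is $y^\star\in W=V(h)$ with $h_0(y^\star)=h^\star$. Since $W$ is regular and $h_1,\dots,h_l$ are generators of the (real radical) vanishing ideal $I(W)$, the Jacobian $J(h)(y^\star)$ has rank $t-\dim(W)$. Lemma \ref{lem:KKT} therefore applies, and the Karush--Kuhn--Tucker conditions hold at $y^\star$ for the problem $\inf_{y\in V(h)} h_0(y)$. This is exactly the hypothesis required by Theorem \ref{theo:pop.KKT}, which, applied with the ambient dimension $t$ in place of $n$, the degree bound $d$, and the exponent $u$ from \eqref{eq:def.u.KKT} in the role of $w$ in \eqref{eq:def.w.KKT}, produces
\begin{equation*}
\rho_r(h_0,\bar h_{\KKT})\;=\;\inf_{y\in V(h)} h_0(y)\;=\;h^\star,
\end{equation*}
with $r$ exactly as in \eqref{eq:r.formula.app}.

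The only nontrivial step is the first one, establishing $\inf_W h_0=h^\star$ via density; the remaining pieces are essentially a translation of earlier lemmas. A minor subtlety is that $\varphi$ need not be surjective onto $V$, but this is harmless because $V^{\reg}$ is dense in $V$ and $\varphi$ parametrizes $V^{\reg}$ exactly, so continuity of $q$ bridges the gap to the singular points. No additional dimension bookkeeping is needed because $u$ in \eqref{eq:def.u.KKT} is defined to match $w$ in \eqref{eq:def.w.KKT} with the substitution $n\mapsto t$.
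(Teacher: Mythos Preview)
Your proposal is correct and follows essentially the same route as the paper: reduce to the regular variety $W$, establish $\inf_{W}h_0=h^\star$, then invoke Lemma~\ref{lem:KKT} and Theorem~\ref{theo:pop.KKT}. The only cosmetic difference is that the paper proves $\inf_W h_0=h^\star$ by building an explicit minimizing sequence $y^{(j)}\in U$ out of a minimizing sequence $x^{(j)}\in V$ perturbed into $V^{\reg}$, whereas you argue more directly via the chain $h^\star=\inf_{V^{\reg}}q=\inf_U h_0\ge \inf_W h_0\ge h^\star$; both arguments rest on the same three ingredients (density of $V^{\reg}$, continuity of $q$, and the isomorphism $U\to V^{\reg}$).
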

\begin{proof}
The first statement is given in Lemma \ref{lem:resol.sing}.
Let us prove the second one.
We show that 
\begin{equation}\label{eq:equi.pop}
h^\star:=\inf_{y\in W} h_0(x)\,.
\end{equation}
By \eqref{eq:ori.pop}, $q-h^\star\ge 0$ on $V$.
The same procedure as in the proof of Theorem \ref{theo:rep.nonneg} gives $h_0-h^\star\ge 0$ on $W$.
It remains to find a sequence $(y^{(j)})_{j=1}^\infty\subset W$ such that $h_0(y^{(j)})\to h^\star$ as $j\to\infty$.
Let $(x^{(j)})_{j=1}^\infty$ be a minimizing sequence of problem \eqref{eq:ori.pop}, i.e., $x^{(j)}\in V$ and $q(x^{(j)})\to h^\star$ as $j\to \infty$.
By the continuity of $q$ and the denseness of $V^{\reg}$ in $V$, let $z^{(j)}\in V^{\reg}$ in a sufficiently small neighborhood of $x^{(j)}$ such that $|q(z^{(j)})-q(x^{(j)})|\le \frac{1}{j}$.
We claim that $(z^{(j)})_{j=1}^\infty$ is also a minimizing sequence of problem \eqref{eq:ori.pop}.
Indeed, we get $z^{(j)}\in V$ and
\begin{equation}
|q(z^{(j)})-h^\star|\le |q(z^{(j)})-q(x^{(j)})|+|q(x^{(j)})-h^\star|\le \frac{1}{j}+|q(x^{(j)})-h^\star|\to 0
\end{equation}
as $j\to \infty$.
Since the restriction $U\to V^{\reg}\,,\,y\mapsto \varphi(y)$, is an isomorphism, there is $y^{(j)}\in U\subset W$ such that $z^{(j)}=\varphi(y^{(j)})$.
It implies that $h_0(y^{(j)})=q(z^{(j)})\to h^\star$ as $j\to\infty$.
By assumption, a global minimizer $y^\star$ of \eqref{eq:pop} exists.
Note that $W=V(h)$.
Since $W$ is regular, the Jacobian matrix $J(h)(y^\star)$ has rank $n-\dim(W)$.
From this, Lemma \ref{lem:KKT} yields that the Karush--Kuhn--Tucker conditions hold for problem \eqref{eq:pop} at $y^\star$.
By Theorem \ref{theo:pop.KKT}, the result follows.
\end{proof}

%\section{Conclusion and discussion}

%The tangency variety and the tangency values play an important
%role in studying of singularities at infinity of polynomials, see [47, 48, 49, 92] for more details.

%Replace Step 4 (a) of Algorithm \ref{alg:reduce.dim.sing} by ``If the singular locus $U_j^{\sing}$ of $U_j$ is empty, then set $A:=A\backslash \{U_j\}$ and $B:=B\cup\{U_j\}$.''
%Make use of the assumption of Lemma \ref{lem:KKT}.

%Future work reduce the complexity

\paragraph{Acknowledgements.} 
The author was supported by the funding from ANITI.

%\bibliography{references} 
\bibliographystyle{abbrv}

\end{document}